\let\@authorline\@empty    
\let\@affilblock\@empty    
\newcommand{\emailicon}[1]{\href{mailto:#1}{\textsuperscript{\faEnvelope[regular]}}}
\newcommand{\orcidicon}[1]{\href{https://orcid.org/#1}{\textsuperscript{\faOrcid}}}
\newcommand{\homepage}[1]{\href{#1}{\textsuperscript{\faHome}}}
\newcommand{\authorentry}[5]{%
  \ifx\@authorline\@empty
  \else
    \g@addto@macro\@authorline{,\ }%
  \fi
  \g@addto@macro\@authorline{%
    #1%
    \ifx&#2&\else\textsuperscript{#2}\fi%
    \ifx&#3&\else\emailicon{#3}\fi%
    \ifx&#4&\else\orcidicon{#4}\fi%
    \ifx&#5&\else\homepage{#5}\fi%
  }%
}
\newcommand{\affilentry}[2]{%
  \g@addto@macro\@affilblock{%
    \textsuperscript{#1}#2\\%
  }%
}
\renewcommand{\maketitle}{%
  \begin{center}
    {\LARGE \bfseries \@title \par}
    \vskip 1em
    {\normalsize
      \@authorline
    }
    \vskip 1em
    {\small \@affilblock}
    \vskip 1em
    {\small \@date \par}
  \end{center}
  \vskip 2em
}
\crefname{equation}{Equation}{Equations}
\crefname{figure}{Figure}{Figures}
\newtheorem{theorem}{Theorem}[section]
\newtheorem{proposition}[theorem]{Proposition}
\newtheorem{lemma}[theorem]{Lemma}
\newtheorem{corollary}[theorem]{Corollary}
\newtheorem{definition}[theorem]{Definition}
\newtheorem{observation}[theorem]{Observation}
\newtheorem{example}[theorem]{Example}
\tikzstyle{vtx}=[circle, draw, fill=black, inner sep=0pt, minimum width=5pt]
\tikzstyle{vtx-white}=[circle, draw, fill=white, inner sep=0pt, minimum width=5pt]
\newcommand{\nbc}{neighborhood-balanced $k$-coloring\xspace}
\newcommand{\nbcd}{neighborhood-balanced $k$-colored\xspace}
\newcommand{\cnbc}{closed neighborhood balanced $k$-coloring\xspace}
\newcommand{\cnbcl}{closed neighborhood balanced $k$-colorable\xspace}
\newcommand{\cnbcd}{closed neighborhood balanced $k$-colored\xspace}
\title{Closed Neighborhood Balanced $k$-Coloring of Graphs}
\begin{document}

\date{}
\maketitle
\begin{abstract}

For a simple graph $G = (V, E)$ and a positive integer $k \ge 2$, a coloring of vertices of $G$ using exactly $k$ colors such that every vertex has an equal number of vertices of each color in its closed neighborhood is called \emph{closed neighborhood-balanced $k$-coloring}, and the graph which admits such a coloring is called \cnbcd graph.
We derive some necessary/sufficient conditions for a graph to admit a \cnbc and discuss various graph operations involving such graphs.
Furthermore, we prove that there is no forbidden subgraph characterization for the class of closed neighborhood-balanced $ k$-colorable graphs.
\end{abstract}

\noindent \textbf{2020 Mathematics Subject Classification:} 05C 15, 05C 78. \\
\textbf{Keywords:} graph coloring, \cnbc.

\section{Introduction}
Let $G=(V,E)$ be a simple graph.
For any vertex $v\in V$, define an \textit{ open neighborhood} of $v$ as a set $N(v):=\{u: uv\in E\}$.
The members of $N(v)$ are called the \textit{neighbors} of $v$ and the cardinality of $N(v)$ is called the \emph{degree} of $v$, denoted as $d(v)$.
Further, if we add the vertex $v$ to its open neighborhood $N(v)$, then we get the closed neighborhood of $v$, denoted by $N[v]$.
For graph-theoretic notation, we refer to Chartrand and Lesniak \cite{chart}.


Freyberg et al. \cite{nbc} introduced the concept `neighborhood balanced coloring'.
A \emph{neighborhood balanced coloring} of a graph $G$ is a vertex coloring of $G$ using two colors, say red and green, such that each vertex has an equal number of neighbors of both colors.
It is easy to see that if a graph admits a neighborhood balanced coloring, then the degree of every vertex is even.
This notion of coloring is somewhat similar to \emph{cordial labeling} \cite{cordial}.
In \cite{nbc}, the authors presented several necessary/sufficient conditions for a graph to admit neighborhood balanced coloring. They further presented several graph classes that admit neighborhood balanced coloring.

Minyard et al. \cite{3nbc} introduced the concept of `neighborhood balanced $3$-coloring' of a graph and gave a characterization of several classes of graphs that admit such a coloring.
A graph is said to admit a \emph{neighborhood balanced $3$-coloring} if the vertices of $G$ can be colored using exactly three colors such that each vertex has an equal number of neighbors of all three colors.

Almeida et al. \cite{knbc} generalized this concept by introducing the notion of neighborhood balanced $k$-coloring of graphs, for any positive integer $k \ge 2$. A coloring of the vertices of $G$ using $k$ colors such that every vertex has an equal number of vertices of each color in its neighborhood is called neighborhood-balanced $k$-coloring, and the graph is called \nbcd graph.

Motivated by the notion of neighborhood balanced colorings of graphs, 
Collins et al.~\cite{collin-cnbc} introduced the concept of 
\emph{closed neighborhood balanced coloring}. 
In this setting, the vertices of a graph are colored with two colors, say red and green, such that in the closed neighborhood of every vertex, the number of red vertices equals the number of green vertices. 
Collins et al.~\cite{collin-cnbc} established several necessary/sufficient conditions for a graph to admit a closed neighborhood balanced coloring. 
They identified various classes of graphs that admit such a coloring and further investigated the behavior of closed neighborhood balanced colored 
graphs under different graph operations. Moreover, they demonstrated that the class of closed neighborhood balanced colored graphs is not hereditary.

In this article, we study  the generalized version of closed neighborhood balanced coloring coloring using $k$ colors, called \emph{\cnbc} of graphs, for any positive integer $k \ge 2$.

Formally, a \cnbc of a graph is defined as follows.
\begin{definition} \label{def:nbc}
    Let $G$ be a graph, and let $k \ge 2$ be an integer.
    If the vertices of $G$ can be colored using the $k$ colors, say $1, 2,\dots, k$, so that every vertex has an equal number of vertices of each color in its closed neighborhood, then the coloring is called a \emph{\cnbc of $G$.}
    A graph that admits such a coloring is called a \emph{\cnbcl graph or \cnbcd graph.} 
\end{definition}

In other words, a \cnbc of graph $G$ is a partition of the vertex set $V$ of $G$ into $k$ sets $V_1, V_2, \dots, V_k$ (that is, color classes) such that every vertex has an equal number of vertices from each set in its closed neighborhood.
Note that if $c$ is a \cnbc of graph $G$ using colors in the order $(1, 2, \dots, k)$, then the colorings $c=c_1, c_2, \dots, c_k$ obtained by rotating the colors in a cyclic order are also closed neighborhood-balanced $k$-colorings of $G$.
We refer to such colorings as the colorings obtained by cyclic shift of $c$.

In this article, we begin by proving the fact that the class of \cnbcd graphs is not hereditary.
Following this, we present some necessary conditions for graphs to admit a \cnbc.
We also study closed neighborhood balanced colored graphs under various graph operations.
We begin by citing the following two theorems.

\begin{lemma}\cite{knbc}\label{nbclemma}
If a graph $G$ admits a \nbc, then the degree of every vertex is a multiple of $k$. 
\end{lemma}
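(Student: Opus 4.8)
The plan is to argue directly from the definition of a \nbc by a simple counting argument on the open neighborhood of an arbitrary vertex. First I would fix a \nbc $c$ of $G$ and an arbitrary vertex $v \in V$. For each color $i \in \{1, 2, \dots, k\}$, let $m_i$ denote the number of neighbors of $v$ receiving color $i$, i.e., $m_i = |\{u \in N(v) : c(u) = i\}|$. Since $c$ assigns exactly one color to each vertex, the sets $\{u \in N(v) : c(u) = i\}$ for $i = 1, \dots, k$ partition the open neighborhood $N(v)$, so that $d(v) = |N(v)| = \sum_{i=1}^{k} m_i$.

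The key step is then to invoke the defining property of a \nbc: every vertex has an equal number of neighbors of each color. Applied to $v$, this gives $m_1 = m_2 = \cdots = m_k$; call this common value $m$. Substituting into the sum above yields $d(v) = \sum_{i=1}^{k} m_i = k m$, so $d(v)$ is a multiple of $k$. Since $v$ was arbitrary, the degree of every vertex of $G$ is a multiple of $k$, as claimed.

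I do not anticipate any genuine obstacle here, as the statement follows immediately from unpacking the definition; there is no need for structural hypotheses on $G$ or for any case analysis. The only point that warrants a moment of care is confirming that the color classes restricted to $N(v)$ form a genuine partition, so that each neighbor contributes to exactly one $m_i$ and the counts sum to $d(v)$ without over- or under-counting. This holds simply because $c$ assigns a single color to each vertex, so the decomposition of $N(v)$ by color is automatic.
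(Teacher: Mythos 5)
Your proof is correct: partitioning $N(v)$ by color and using the defining property that all color counts are equal immediately gives $d(v)=km$. The paper cites this lemma from \cite{knbc} without reproducing a proof, and your argument is the standard one-line counting argument that establishes it.
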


\begin{theorem}\cite{knbc}\label{lexiknbc}
    Let $G$ and $H$ be two graphs. If $H$ admits a \nbc $c$ with $|V^c_i(H)|=|V^c_j(H)|$, then the lexicographic product $G[H]$ admits a \nbc.
\end{theorem}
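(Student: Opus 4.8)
The plan is to build a \nbc of $G[H]$ directly from the given coloring $c$ of $H$ by pulling it back along the projection onto the second coordinate. Recall that the vertex set of the lexicographic product $G[H]$ is $V(G) \times V(H)$, and two vertices $(g_1, h_1)$ and $(g_2, h_2)$ are adjacent precisely when $g_1 g_2 \in E(G)$, or when $g_1 = g_2$ and $h_1 h_2 \in E(H)$. I would define a coloring $c'$ of $G[H]$ by setting $c'\big((g,h)\big) = c(h)$; since $c$ uses all $k$ colors on $H$ and $G$ is nonempty, $c'$ uses exactly $k$ colors on $G[H]$.

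The heart of the argument is a clean decomposition of the open neighborhood of an arbitrary vertex $(g,h)$ in $G[H]$ into two disjoint pieces. The first piece consists of every vertex lying in a copy of $H$ indexed by a $G$-neighbor of $g$, namely $\{(g', h') : g' \in N_G(g),\ h' \in V(H)\}$; the second piece consists of the vertices $\{(g, h') : h' \in N_H(h)\}$ lying in the same copy of $H$ as $(g,h)$. I would then count, for each color $i$, how many vertices of color $i$ appear in the neighborhood of $(g,h)$.

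For the first piece, each of the $d_G(g)$ neighboring copies of $H$ contributes exactly $|V^c_i(H)|$ vertices of color $i$, so its total contribution is $d_G(g)\,|V^c_i(H)|$. Here the hypothesis $|V^c_i(H)| = |V^c_j(H)|$ is exactly what is needed: it makes this contribution independent of $i$. For the second piece, the number of color-$i$ vertices is the number of $c$-color-$i$ neighbors of $h$ in $H$, which is independent of $i$ because $c$ is a \nbc of $H$. Adding the two contributions shows that the number of color-$i$ vertices in the neighborhood of $(g,h)$ is the same for every color $i$, so $c'$ is a \nbc of $G[H]$.

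There is no serious obstacle here; the construction is essentially forced and the verification is a short counting argument. The one point worth stating carefully is why the hypothesis on equal color-class sizes cannot be dropped: without it, the contribution $d_G(g)\,|V^c_i(H)|$ from the neighboring copies would in general differ across colors, and the balance would fail at any vertex $(g,h)$ with $d_G(g) \ge 1$. Isolated vertices of $G$ (with $d_G(g) = 0$) are automatically fine, since then only the second piece contributes, and that piece is already balanced because $c$ is a \nbc of $H$.
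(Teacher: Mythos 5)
Your proof is correct: the pullback coloring $c'((g,h)) = c(h)$, together with the decomposition of $N_{G[H]}((g,h))$ into the $d_G(g)$ full neighboring copies of $H$ (each contributing $|V^c_i(H)|$ vertices of color $i$, constant in $i$ by hypothesis) and the within-copy neighbors $N_H(h)$ (balanced because $c$ is a \nbc of $H$), is exactly the standard argument. Note that this paper only cites the theorem from \cite{knbc} without reproducing a proof, so there is nothing further to compare against; your write-up, including the remark on why the equal-class-size hypothesis is needed, is the expected one.
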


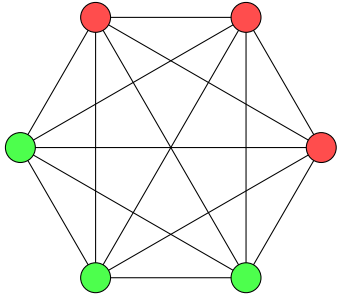
\begin{figure}
 \centering
 \begin{tikzpicture}[scale=2]

  \coordinate (v1) at (1.000, 0.000);
  \coordinate (v2) at (0.500, 0.866);
  \coordinate (v3) at (-0.500, 0.866);
  \coordinate (v4) at (-1.000, 0.000);
  \coordinate (v5) at (-0.500,-0.866);
  \coordinate (v6) at (0.500,-0.866);

  \foreach \i/\j in {1/2,1/3,1/4,1/5,1/6,
                     2/3,2/4,2/5,2/6,
                     3/4,3/5,3/6,
                     4/5,4/6,
                     5/6}{
    \draw (v\i) -- (v\j);
  }

  \node[draw, circle, fill=red!70,   inner sep=4pt] at (v1) {};
  \node[draw, circle, fill=red!70, inner sep=4pt] at (v2) {};
  \node[draw, circle, fill=red!70,  inner sep=4pt] at (v3) {};
  \node[draw, circle, fill=green!70,inner sep=4pt] at (v4) {};
  \node[draw, circle, fill=green!70,inner sep=4pt] at (v5) {};
  \node[draw, circle, fill=green!70,inner sep=4pt] at (v6) {};

\end{tikzpicture}
\caption{A closed neighborhood-balanced $2$-coloring of $K_{6}$.}
\label{fig: cnbc of K6}
 \end{figure}


\section{Results} \label{sec:main}

We begin this section by deriving some necessary conditions for graphs to admit a \cnbc.
Note that if a graph admits such a coloring, then the degree of every vertex should be  at least $k-1$.
We state this necessary condition in the following lemma in a more general form.

\begin{lemma}\label{cnbclem1}
If a graph $G$ admits a \cnbc, then for any vertex $v$ of $G$, $d(v)\equiv -1\pmod{k}$.
\end{lemma}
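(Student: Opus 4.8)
The plan is to prove this by a direct counting argument applied to the closed neighborhood of an arbitrary vertex. Fix a \cnbc of $G$ and fix an arbitrary vertex $v \in V$. By definition, the coloring partitions $V$ into color classes $V_1, V_2, \dots, V_k$, and the balance condition says that every color appears the same number of times inside $N[v]$. So my first step is to name that common multiplicity: let $m$ denote the number of vertices of each color lying in $N[v]$, so that $|V_i \cap N[v]| = m$ for every $i \in \{1, 2, \dots, k\}$.

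Next I would sum these counts over all $k$ colors. Since the sets $V_1 \cap N[v], \dots, V_k \cap N[v]$ partition $N[v]$, their sizes add up to $|N[v]|$, giving
\[
|N[v]| \;=\; \sum_{i=1}^{k} |V_i \cap N[v]| \;=\; k\,m.
\]
Thus $|N[v]|$ is necessarily a multiple of $k$. The remaining step is to translate this into a statement about the degree. Because $N[v] = N(v) \cup \{v\}$ and $v \notin N(v)$, we have $|N[v]| = d(v) + 1$. Substituting into the displayed identity yields $d(v) + 1 = k\,m$, and therefore $d(v) \equiv -1 \pmod{k}$, as claimed.

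There is no real obstacle here; the result is an immediate consequence of the definition, and the argument is purely a parity/divisibility count. The only point that deserves care is the ``$+1$'' arising from including $v$ itself in its closed neighborhood: this is exactly what distinguishes the present lemma from the open-neighborhood analogue (\cref{nbclemma}), where counting over $N(v)$ forces $d(v) \equiv 0 \pmod{k}$ instead. I would emphasize that the stated congruence $d(v) \equiv -1 \pmod{k}$ recovers, for $k = 2$, the familiar fact that every vertex of a \tcnbcd graph has odd degree, which serves as a consistency check on the computation.
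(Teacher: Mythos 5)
Your proof is correct and uses the standard counting argument that the paper itself treats as immediate (the lemma is stated without proof there): each of the $k$ colors appears $m$ times in $N[v]$, so $d(v)+1=|N[v]|=km$, giving $d(v)\equiv -1\pmod{k}$. Nothing is missing, and your remark about the ``$+1$'' distinguishing this from the open-neighborhood case in \cref{nbclemma} is exactly the right point of care.
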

The next observation talks about vertices having the same neighborhood in a graph.
\begin{observation}
    Let $G$ be a graph and $u,v\in V(G)$ such that $N(u)=N(v)$. Then $u$ and $v$ are of the same color in any \cnbc of $G$.
\end{observation}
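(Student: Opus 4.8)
The plan is to exploit the fact that the closed neighborhoods $N[u]$ and $N[v]$ are almost identical, and to read off the color of each vertex from the color distribution of their common part. First I would record the easy structural consequence of the hypothesis: since $G$ is simple, $u\notin N(u)=N(v)$, so $uv\notin E$; hence $v\notin N(u)$ and $u\notin N(v)$. Writing $N$ for the common set $N(u)=N(v)$, this gives $N[u]=N\cup\{u\}$ and $N[v]=N\cup\{v\}$, two sets that coincide on $N$ and differ only in the single vertex $u$ versus $v$.

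Next I would count colors. Fix a \cnbc $c$ of $G$, and for each color $i\in\{1,\dots,k\}$ let $a_i$ be the number of vertices of color $i$ inside $N$. Set $m=|N[u]|/k$, and note that $|N[u]|=|N[v]|=|N|+1$, so the two closed neighborhoods share the same per-color target $m$. The balance condition at $u$ states that every color occurs exactly $m$ times in $N[u]=N\cup\{u\}$. Since passing from $N$ to $N[u]$ increments only the count of color $c(u)$ by one, this forces $a_{c(u)}=m-1$ and $a_i=m$ for every $i\neq c(u)$. In words, $c(u)$ is the unique color that is \emph{deficient} in $N$, occurring $m-1$ times while every other color occurs exactly $m$ times.

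Finally I would run the identical argument at $v$. Because $N(v)=N$ carries the very same color counts $a_i$, the balance condition at $v$ identifies $c(v)$ as the unique color occurring $m-1$ times in $N$. But this deficient color depends only on the color distribution of the common set $N$, not on which of $u,v$ we started from; hence $c(u)=c(v)$.

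There is no serious obstacle here, as the result reduces to a short pigeonhole-style counting argument. The only points meriting care are the preliminary verification that $u$ and $v$ are non-adjacent (so that neither vertex lies in $N$, and the two closed neighborhoods genuinely differ by exactly one element) and the observation that the deficient color is well-defined and uniquely determined by $N$ alone, which is precisely what forces the colors of $u$ and $v$ to coincide.
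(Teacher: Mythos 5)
Your proof is correct, and since the paper states this observation without proof, your counting argument is exactly the intended reasoning: $N[u]$ and $N[v]$ agree on the common open neighborhood $N$ and differ only in the single added vertex, so each of $c(u)$ and $c(v)$ must be the unique color that is deficient (appearing $m-1$ rather than $m$ times) in $N$, forcing $c(u)=c(v)$. Your preliminary check that $uv\notin E$ (so neither vertex lies in $N$) is the right point to be careful about, and the argument is complete.
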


Observe that a \cnbcd graph without isolated vertices should have at least one vertex of every color.

\begin{lemma} \label{th:order of nbc graph}
    If $G$ is a \cnbcd graph without isolated vertices, then the order of $G$ is at least $k$.
\end{lemma}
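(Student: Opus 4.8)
The plan is to extract the bound from the color balance inside the closed neighborhood of a single vertex, rather than arguing globally. Since a \cnbc uses all $k$ colors, $G$ is nonempty, so I can fix an arbitrary vertex $v$. By the balance condition, each of the $k$ colors occurs the same number of times, say $m$, inside $N[v]$, so that $|N[v]| = km$. The crux is to show $m \ge 1$: because $v \in N[v]$, the color assigned to $v$ appears at least once in $N[v]$, and since all $k$ color counts in $N[v]$ are equal, every one of them is at least $1$, forcing $m \ge 1$. Consequently $|N[v]| = km \ge k$, and as $N[v] \subseteq V(G)$ I conclude $|V(G)| \ge k$.

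Alternatively, the same bound can be read off from \Cref{cnbclem1} together with the no-isolated-vertices hypothesis. That lemma gives $d(v) \equiv -1 \pmod{k}$, i.e. $|N[v]| = d(v)+1$ is a multiple of $k$, while the absence of isolated vertices gives $d(v) \ge 1$, so $|N[v]| = d(v)+1 \ge 2$ is a positive multiple of $k$ and hence is at least $k$. Either route yields $|V(G)| \ge |N[v]| \ge k$ immediately.

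I expect essentially no obstacle here; the only point needing a moment's care is the justification of $m \ge 1$ (equivalently, that every color actually appears in $N[v]$), which is precisely the content of the remark preceding the lemma and follows from $v$ lying in its own closed neighborhood. The role of the no-isolated-vertices hypothesis is mainly to keep the statement consistent with the existence of a coloring and to make the divisibility route go through: an isolated vertex $w$ would have $N[w] = \{w\}$, and $|N[w]| = 1$ cannot be split equally among $k \ge 2$ colors, so such a vertex could never occur in a \cnbc in the first place.
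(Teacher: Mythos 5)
Your proof is correct. The paper gives no written proof for this lemma; its intended argument is the sentence immediately preceding it, namely that a \cnbcd graph without isolated vertices must contain at least one vertex of each of the $k$ colors, from which $|V(G)|\ge k$ is immediate. That is a global counting of color classes. Your first route instead localizes to a single closed neighborhood: since all $k$ color counts in $N[v]$ are equal and $v$ contributes at least one to its own color's count, each count is at least $1$ and $|N[v]|=km\ge k$. This is sound, and it actually proves something slightly stronger than the lemma --- that every vertex satisfies $|N[v]|\ge k$, i.e.\ $d(v)\ge k-1$, which is the minimum-degree remark the paper states separately before \Cref{cnbclem1}. Your second route, via \Cref{cnbclem1} plus $d(v)\ge 1$, is equally valid. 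One small point worth making explicit (which you do note at the end): the no-isolated-vertices hypothesis is essentially redundant for $k\ge 2$, since an isolated vertex can never satisfy the balance condition; the hypothesis only matters if one allows the degenerate reading in which an ``empty'' balance (all counts zero) is impossible anyway because $v\in N[v]$. So either route yields the claim, and your justification of $m\ge 1$ is exactly the right point to dwell on.
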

The lower bound on the order of a \cnbcd graph given in the above lemma is sharp.
There are \cnbcd graphs of order $k$, for example, the complete graph $K_{k}$, where each of the $k$ vertices receives one of the $k$ colors.

The following lemma shows that if a graph $G$ is \cnbcl, where $k$ is even, then it is also closed neighborhood-balanced $2$-colorable.
However, by Lemma \ref{cnbclem1}, the converse need not be true.

\begin{lemma}\label{cnbc:reductionlemma}
If a graph $G$ admits a \cnbc and $p$ divides $k$, then the graph $G$ also admits a closed neighborhood-balanced $p$-coloring.
\end{lemma}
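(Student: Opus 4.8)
The plan is to take a \cnbc $c$ of $G$ using the colors $1, 2, \dots, k$ and collapse these into $p$ groups of equal size, producing a closed neighborhood-balanced $p$-coloring. Since $p$ divides $k$, write $q = k/p$ and partition the color set $\{1, \dots, k\}$ into the $p$ consecutive blocks $B_j = \{(j-1)q + 1, \dots, jq\}$ for $j = 1, \dots, p$. Define the new coloring $c'$ by setting $c'(v) = j$ whenever $c(v) \in B_j$, i.e. $c'(v) = \lceil c(v)/q \rceil$. The whole argument then reduces to checking that $c'$ is balanced and uses exactly $p$ colors.

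First I would record what balance of $c$ gives. By \cref{def:nbc}, for every vertex $v$ the closed neighborhood $N[v]$ contains each of the $k$ colors the same number of times, so each color occurs exactly $|N[v]|/k = (d(v)+1)/k$ times in $N[v]$. With this in hand, verifying balance of $c'$ is immediate: fixing $v$, the number of vertices of new color $j$ in $N[v]$ equals the sum over the $q$ original colors in $B_j$ of their counts in $N[v]$, which is $q \cdot (d(v)+1)/k = (d(v)+1)/p$. This value is independent of $j$, so all $p$ new colors occur equally often in $N[v]$, which is exactly the balance condition for a $p$-coloring.

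It remains to confirm that $c'$ genuinely uses all $p$ colors. Note that $N[v]$ always contains $v$ itself, whose own color therefore has count at least $1$ in $N[v]$; by balance the common per-color count $(d(v)+1)/k$ is thus at least $1$, so \emph{every} original color appears in $N[v]$, and in particular all $k$ colors are used by $c$ across $G$. Since each block $B_j$ is nonempty, it follows that every new color $j$ is used by $c'$, so $c'$ uses exactly $p$ colors. Together with the balance established above, this shows $c'$ is a closed neighborhood-balanced $p$-coloring of $G$, as required.

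The argument is essentially bookkeeping, so I expect no serious obstacle. The only point needing care is the surjectivity check in the last step—ensuring that merging colors does not inadvertently drop the number of distinct colors below $p$—which is handled cleanly by the observation that each color appears in every closed neighborhood (equivalently, the no-isolated-vertex phenomenon for \cnbcd graphs underlying \cref{th:order of nbc graph}).
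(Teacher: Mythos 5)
Your proposal is correct and follows essentially the same approach as the paper: both merge the $k$ color classes into $p$ groups of equal size $k/p$ (you use consecutive blocks, the paper uses residues mod $p$, which is immaterial) and observe that the per-color counts in each closed neighborhood simply add up. Your extra check that all $p$ new colors are actually used is a small point of rigor the paper leaves implicit, not a different argument.
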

\begin{proof}
   Let \( f \colon V(G) \to \{1, 2, \dots, k\} \) be a \cnbc\ of \(G\). Define a new coloring by replacing each color \( i \in \{1, 2, \dots, k\} \) with the color \( j \in \{1, 2, \dots, p\} \), where \(j \equiv i (\bmod\ p )\). Since \( p \) divides \( k \), each of the \( p \) colors replaces exactly the same number of original colors. Moreover, because the closed neighborhood of every vertex in \(G\) has an equal number of vertices in each of the original \( k \) color classes, it follows that the closed neighborhood of every vertex still has an equal number of vertices in each of the \( p \) new color classes.
\end{proof}
Using Lemma \ref{cnbc:reductionlemma}, we can say that the complete graph $K_n$ admits \cnbc if and only if $n\equiv 0 (\bmod\ k)$. We now give a complete characterization of \cnbcl hamming graphs.
We first give a formal definition of hamming graphs.
For a set $S$, define $S^{d} := \underbrace{S \times S \times \cdots \times S}_{d-\mathrm{times}}$ to be the $d$-fold cartesian product of $S$ with itself.
A \emph{hamming} graph, denoted as $H(d,k)$, is a graph with vertex set $S^{d}$, where $|S| = k$, and two vertices are adjacent if they differ in exactly one coordinate.
Note that the hamming $H(d,k)$ is $d(k-1)$-regular.
For our convenience, without loss of generality, we take $S = \{1, 2, \dots, k\}$.

\begin{theorem} \label{th: hamming graphs}
The hamming graph $H(d,k)$ is a closed \cnbcd graph if and only if $d\equiv 1 (\bmod\ k)$.
\end{theorem}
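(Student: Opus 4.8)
The plan is to split the biconditional into its two directions, obtaining necessity directly from the general degree obstruction of \Cref{cnbclem1} and establishing sufficiency through an explicit algebraic coloring.

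For necessity, I would use that $H(d,k)$ is $d(k-1)$-regular, as noted above. Every vertex $v$ therefore has $d(v) = d(k-1) \equiv -d \pmod{k}$. Since any graph admitting a \cnbc must satisfy $d(v) \equiv -1 \pmod{k}$ at every vertex by Lemma~\ref{cnbclem1}, we get $-d \equiv -1 \pmod{k}$, which is precisely $d \equiv 1 \pmod{k}$.

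For sufficiency, assume $d \equiv 1 \pmod{k}$ and identify both the alphabet $S$ and the color set with $\mathbb{Z}_k = \{0,1,\dots,k-1\}$. I would set $t := \tfrac{d(k-1)+1}{k}$; writing $d = 1 + mk$ gives $t = 1 + m(k-1)$, so $t$ is an integer with $1 \le t \le d$ exactly because $d \equiv 1 \pmod{k}$ and $d \ge 1$. Fix any $t$-element subset $T \subseteq \{1,\dots,d\}$ and color $v = (v_1,\dots,v_d)$ by $c(v) := \sum_{i \in T} v_i \pmod{k}$. The core computation is to count the colors appearing in $N[v]$ for a fixed vertex $v$ with $s := c(v)$. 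A neighbor obtained by changing coordinate $i$ to some $w \ne v_i$ has color $s + (w - v_i) \pmod{k}$ when $i \in T$, and color $s$ when $i \notin T$. Hence for each $i \in T$ the $k-1$ neighbors in direction $i$ realize every color of $\mathbb{Z}_k \setminus \{s\}$ exactly once (as $w - v_i$ ranges over all nonzero residues), while for each $i \notin T$ all $k-1$ neighbors retain color $s$. Adding the contribution of $v$ itself, color $s$ occurs $1 + (d-t)(k-1)$ times and every other color occurs $t$ times in $N[v]$. The required equality $1 + (d-t)(k-1) = t$ rearranges to $tk = d(k-1)+1$, which holds by the choice of $t$; thus all $k$ colors occur exactly $t$ times, and since $t \ge 1$ every color is genuinely used. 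As $v$ was arbitrary, $c$ is a \cnbc of $H(d,k)$.

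The one conceptual obstacle is recognizing that the naive "sum of all coordinates" coloring ($T = \{1,\dots,d\}$) fails beyond $d=1$: there the vertex's own color $s$ appears only once in $N[v]$ while every other color appears $d$ times, so $s$ is badly under-represented. Dropping $d-t$ coordinates from the sum repairs this, because each ignored direction contributes $k-1$ additional vertices of color $s$ to the closed neighborhood; choosing the subset size $t = \tfrac{d(k-1)+1}{k}$ is exactly what equalizes the counts, and verifying that this $t$ is a genuine integer lying in $[1,d]$ is the precise point where the hypothesis $d \equiv 1 \pmod{k}$ is consumed in the forward direction.
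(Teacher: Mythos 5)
Your proof is correct, and the sufficiency direction takes a genuinely different route from the paper's. The paper proves sufficiency by induction on $n$ where $d = kn+1$: the base case $H(k+1,k)$ is handled via a nested family of partitions $X^1, X^2, \dots, X^k$ of the vertex set and a recursive coloring built from cyclic shifts of a rainbow coloring, and the inductive step repeats the same shift-and-replicate scheme across the $k^k$ copies of $H(kn-k+1,k)$ sitting inside $H(kn+1,k)$. Your argument replaces all of this with a single closed-form coloring $c(v) = \sum_{i\in T} v_i \pmod k$ over a coordinate subset $T$ of size $t = \frac{d(k-1)+1}{k}$, and the verification reduces to the one-line count that color $c(v)$ appears $1+(d-t)(k-1)$ times and every other color $t$ times in $N[v]$, which are equal precisely by the choice of $t$. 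The key identity $1+(d-t)(k-1)=t \iff tk = d(k-1)+1$ checks out, $t$ is an integer in $[1,d]$ exactly when $d\equiv 1\pmod k$, and since $t\ge 1$ every color appears in each closed neighborhood, so all $k$ colors are genuinely used. Your approach is shorter, avoids the induction entirely, and makes transparent where the hypothesis $d\equiv 1\pmod k$ is consumed; it also exposes the degrees of freedom in the construction (any $t$-subset $T$ works), whereas the paper's recursive construction produces one particular coloring and requires a somewhat delicate bookkeeping of which neighbors lie in which $X^i$ or which copy of $H(kn-k+1+i,k)$. The necessity direction is the same in both: the degree condition of Lemma~\ref{cnbclem1} applied to the $d(k-1)$-regular graph $H(d,k)$.
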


\begin{proof}
If $d\not\equiv (\bmod\ k)$, then the degree of any vertex of $H(d,k)$ is not congruent to $-1(\bmod\ k)$ and hence it is not a \cnbcd graph.
Conversely, suppose that $d \equiv 1 (\bmod\ k)$. 
Write $d = kn+1$.
We show that $H(kn+1,k)$ admits a \cnbc by induction on $n$.
\par
For $n=1$, consider the hamming graph $H(k+1,k)$.
Each vertex of $H(k+1,k)$ is represented as a $(k+1)$-tuple $(a_1, a_2, \dots, a_k,a_{k+1})$ where each $a_i \in S$.
For each $(a_1, a_2, \dots, a_{k}) \in S^{k}$, consider set $X^{1}_{(a_1, \dots, a_{k})} = \{(a_1, a_2, \dots, a_{k}, y_{k+1}) : y_{k+1} \in S\}$.
That is, each $X^1$ is a collection of vertices having the same first $k$ coordinates.
There are $k^{k}$ such sets, each of cardinality $k$, and it gives us a partition of $S^{k+1} = V(H)$ (refer to Table \ref{tab: partition of vertex set of H(4,4)}).
Again for each $(a_1, a_2,  \dots, a_{k-1}) \in S^{k-1}$, let $X^{2}_{(a_1,\dots, a_{k-1})} : = \{(a_1, a_2, \dots,a_{k-1}, y_{k}, y_{k+1}) : y_{k},\ y_{k+1} \in S\}$.
That is, each $X^2$ is a collection of vertices having the same first $k-1$ coordinates.
There are $K^{k-1}$ such sets, each of cardinality $k^2$, and it gives a partition of the collection of all sets of the form $X^1$.
Continuing in this way, for each $r = 1, \dots, k$, we obtain the set $X^{r}_{(a_1,a_2,\dots,a_{k+1-r})} =\{ (a_1, a_2,\dots, a_{k+1-r}, y_{k+2-r}, \dots, y_{k+1}) : y_{k+2-r}, \dots, y_{k+1} \in S\}$ having the same first $k+1-r$ coordinates.
Note that for each $r = 1, \dots, k$, $|X_{(a_1,a_2,\dots,a_{k+1-r})}^{r}| = (k+1)^r$ and it gives us a partition of all sets of the form $X^{r-1}$ (refer to Table \ref{tab: partition of vertex set of H(4,3)}).
It is straightforward to verify that for each $1 \le i \le k$, the set $X^i$ consists of $k$ sets of type $X^{i-1}$.
Note that for each $1 \le j \le k$, the tuple in the suffix of the sets of the type $X^j$ is of length $k+1-j$.
\par
Now we give the coloring scheme.
Consider $(1, 1,\dots, 1) \in S^{k+1}$.
We color the $k$ vertices in $X^1_{(1,1, \dots, 1)}$ by the coloring $c^{1}$ so that it is a rainbow set.
Then for $i \in S \setminus \{1\}$, we color the vertices of the form $X^1_{(1,1, \dots, 1, i)}$ by $c^{1}_{i}$ obtained by $i$th cyclic shift of colors in $c^{1}$ (recall that by our convention, the first cyclic shift of $c^1$ is $c^1$ itself).
This gives a coloring of $X^2_{(1,1, \dots, 1)}$.
We call this coloring scheme $c^2$.
Again for each $i \in S \setminus \{1\} $, we color $k^2$ vertices in $X^{2}_{(1, \dots, 1, i)}$ by $c^{2}_{i}$ obtained by the $i$th cyclic shift of $c^{2}$. 
Continuing in this way, for each $1 \le j \le k$, we color the $k^{j}$ vertices in $X^{j}_{(1, \dots, 1, i)}$ by $c^{j}_{i}$ obtained by $i$th cyclic shift of $c^{j}$ and hence we get a coloring $c^{j+1}$ of $X^{j+1}_{(1, 1, \dots, 1)}$.
In this way, after obtaining coloring $c^{k}$ for $X^{k}_{1}$, for each $i \in S \setminus \{1\}$, we apply the same coloring for $X^{k}_{i}$.
This gives us a coloring for $S^{k+1} = V(H)$.
\par
Now we show that the above coloring is a \cnbc.
For this consider an arbitrary vertex $(b_1, \dots, b_{k+1})$ of $G$.
By above partitions, we have $(b_1, \dots, b_{k+1}) \in X^{1}_{(b_1, \dots, b_{k})}$ and since it is a rainbow set of $k$ vertices, $(b_1, \dots, b_{k+1})$ has $k-1$ neighbors of distinct colors in $X^{1}_{(b_1, \dots, b_{k})}$.
Further, the vertex $(b_1, \dots, b_{k+1})$ has $k-1$ neighbors, exactly one in each of the $k-1$ other $X^1$ sets, which are in the same $X^{2}_{(b_1, \dots, b_{k-1})}$.
Continuing in this way, for each $1 \le i \le k-1$, $(b_1, \dots, b_{k+1}) \in X^{i}_{(b_1, \dots, b_{k-i+1})}$ has $k-1$ neighbors, exactly one in each of the other $X^{i}$s, which are in $X^{i+1}_{b_1, \dots, b_{k-i}}$.
Note that, so far, the vertex $(b_1, \dots, b_{k+1})$ has exactly $k$ vertices in each of the $k-1$ color classes, different from its own color in its closed neighborhood.
Lastly, the vertex $(b_1, \dots, b_{k+1})$ has  $k-1$ neighbors, exactly one in each of the other $X^{k}$ sets.
All of these neighbors have the same color as $(b_1, \dots, b_{k+1})$, since all $X^{k}$ sets are colored under the same coloring scheme.
This ensures that the vertex $(b_1, \dots, b_{k+1})$ has an equal number of vertices in each color class in its closed neighborhood.
Therefore, we conclude that $H(k+1,k)$ is a \cnbcd graph.
This completes the base case.
\par
Now suppose $n \ge 2$.
Assume that the result is true for all hamming graphs of the form $H(kj+1, k)$ for all $ j < n$.
Consider the hamming graph \( H(kn+1, k) \).
By definition, \( H(kn+1, k) \) contains \( k \) vertex-disjoint copies of \( H(kn , k) \), each of which in turn contains \( k \) vertex-disjoint copies of \( H(kn - 1, k) \), and so on.
In particular, \( H(kn+1, k) \) contains \( k^k \) vertex-disjoint copies of \( H(kn - k+1, k) \).
By the induction hypothesis, each copy of \( H(kn - k+1, k) \) admits a \nbc, say \( c_{kn-k} \).
\par
Now, consider a specific copy of \( H(kn , k) \) in \( H(kn+1, k) \).
Within this  \( H(kn , k) \), select one copy of \( H(kn  - 1, k) \), then a copy of \( H(kn - 2, k) \), and continue down to a copy \( H(kn - k+1, k) \).
Let us assume that this copy is colored using \( c_{kn-k} \).
In the same copy of \( H(kn - k + 2, k) \), color the remaining \( k - 1 \) copies of \( H(kn - k+1, k) \) using the $k-1$ colorings \( c^2_{kn-k}, c^3_{kn-k}, \dots, c^{k}_{kn-k} \) obtained by cyclic shift of \( c_{kn-k} \).
We denote the resulting coloring of this \( H(kn - k + 2, k) \) as \( c_{kn-k+1} \).
Repeat this process: use cyclic shifts \( c^2_{kn-k+1}, c^3_{kn-k+1}, \dots, c^{k}_{kn-k+1} \) to color the remaining \( k - 1 \) copies of \( H(kn - k + 2, k) \), resulting in a coloring \( c_{kn-k+2} \) of  \( H(kn - k + 3, k) \).
Continue this recursive coloring until a coloring \( c_{kn-1} \) is obtained for \( H(kn  , k) \).
Apply this same coloring to each of the other \( k - 1 \) copies of \( H(kn  , k) \) to obtain a complete coloring of \( H(kn+1, k) \).
\par
Now consider a vertex \( v \in V(H(kn+1, k)) \).
This vertex belongs to a specific copy of \( H(kn - k+1, k) \), say $H_v$, which lies within some \( H(kn - k + 2, k) \), which in turn lies within \( H(kn - k + 3, k) \), and so on, up to some \( H(kn  , k) \).
In \(H_v =  H(kn - k+1, k) \), the vertex \( v \) has exactly one neighbor in each of the other $k-1$ color classes, as all $H(kn-k+1,k)$ copies receive one of the colorings from \( c_{kn-k} = c^1_{kn-k}, c^2_{kn-k}, \dots, c^{k}_{kn-k} \), all of which are closed neighborhood-balanced $k$-colorings.
Additionally, \( v \) has \( k - 1 \) neighbors, exactly one from each of the remaining $k-1$ copies of \( H(kn - k+1, k) \) within the same \( H(kn - k + 2, k) \), each with a distinct color differing by a cyclic shift.
In general, for each \( 1 \leq i \leq k - 1 \), the vertex \( v \) has \( k - 1 \) neighbors in the corresponding \( H(kn - k +1+ i, k) \) copies within \( H(kn - k +2+ i , k) \), where again each of these neighbors receives a distinct color via cyclic rotation.
Thus, up to this stage \( v \) has exactly \( k \) neighbors of every color except its own color.
Lastly, since the coloring \( c_{kn-1} \) is applied identically across all copies of \( H(kn, k) \) within \( H(kn+1, k) \), each vertex \( v \) has \( k - 1 \) neighbors of its own color, ensuring that $N[v]$ is equally colored.
This shows that $H(kn, k)$ is \cnbcd.
\end{proof}
\begin{table}[p]
    \setlength{\tabcolsep}{5pt}
    \centering
    \begin{subtable}[t]{0.48\textwidth}
  \centering
  \caption{Partition of $X_{(1)}^{3}$}
  \renewcommand{\arraystretch}{1.5}
  \resizebox{\linewidth}{!}{%
  \begin{tabular}{|c|c|c|c|c|c|}
  \hline
  \multirow{9}{*}{$X_{(1)}^{3}$} & \multirow{3}{*}{$X_{(1,1)}^{2}$} & $X_{(1,1,1)}^{1}$ &\cellcolor{red!40} $(1,1,1,1)$ & \cellcolor{blue!40}$(1,1,1,2)$ & \cellcolor{green!40}$(1,1,1,3)$ \\ \cline{3-6}
                                  &                                  & $X_{(1,1,2)}^{1}$ &\cellcolor{blue!40} $(1,1,2,1)$ &\cellcolor{green!40} $(1,1,2,2)$ &\cellcolor{red!40} $(1,1,2,3)$ \\ \cline{3-6}
                                  &                                  & $X_{(1,1,3)}^{1}$ & \cellcolor{green!40}$(1,1,3,1)$ & \cellcolor{red!40} $(1,1,3,2)$ & \cellcolor{blue!40} $(1,1,3,3)$ \\ \cline{2-6}
                                  & \multirow{3}{*}{$X_{(1,2)}^{2}$} & $X_{(1,2,1)}^{1}$ & \cellcolor{blue!40}$(1,2,1,1)$ & \cellcolor{green!40}$(1,2,1,2)$ &\cellcolor{red!40} $(1,2,1,3)$ \\ \cline{3-6}
                                  &                                  & $X_{(1,2,2)}^{1}$ & \cellcolor{green!40}$(1,2,2,1)$ & \cellcolor{red!40} $(1,2,2,2)$ & \cellcolor{blue!40} $(1,2,2,3)$ \\ \cline{3-6}
                                  &                                  & $X_{(1,2,3)}^{1}$ & \cellcolor{red!40}$(1,2,3,1)$ & \cellcolor{blue!40} $(1,2,3,2)$ & \cellcolor{green!40} $(1,2,3,3)$ \\ \cline{2-6}
                                  & \multirow{3}{*}{$X_{(1,3)}^{2}$} & $X_{(1,3,1)}^{1}$ & \cellcolor{green!40}$(1,3,1,1)$ & \cellcolor{red!40} $(1,3,1,2)$ & \cellcolor{blue!40} $(1,3,1,3)$ \\ \cline{3-6}
                                  &                                  & $X_{(1,3,2)}^{1}$ & \cellcolor{red!40} $(1,3,2,1)$ & \cellcolor{blue!40}$(1,3,2,2)$ &\cellcolor{green!40} $(1,3,2,3)$ \\ \cline{3-6}
                                  &                                  & $X_{(1,3,3)}^{1}$ & \cellcolor{blue!40}$(1,3,3,1)$ & \cellcolor{green!40}$(1,3,3,2)$ & \cellcolor{red!40}$(1,3,3,3)$ \\ \cline{2-6}\hline
  \end{tabular}%
  }
\end{subtable}
\hfill
\begin{subtable}[t]{0.48\textwidth}
  \centering
  \caption{Partition of $X_{(2)}^{3}$}
  \renewcommand{\arraystretch}{1.5}
  \resizebox{\linewidth}{!}{%
  \begin{tabular}{|c|c|c|c|c|c|}
  \hline
  \multirow{9}{*}{$X_{(2)}^{3}$} & \multirow{3}{*}{$X_{(2,1)}^{2}$} & $X_{(2,1,1)}^{1}$ &\cellcolor{red!40} $(2,1,1,1)$ & \cellcolor{blue!40}$(2,1,1,2)$ & \cellcolor{green!40}$(2,1,1,3)$ \\ \cline{3-6}
                                  &                                  & $X_{(2,1,2)}^{1}$ &\cellcolor{blue!40} $(2,1,2,1)$ &\cellcolor{green!40} $(2,1,2,2)$ &\cellcolor{yellow!40} $(2,1,2,3)$ \\ \cline{3-6}
                                  &                                  & $X_{(2,1,3)}^{1}$ & \cellcolor{green!40}$(2,1,3,1)$ & \cellcolor{yellow!40} $(2,1,3,2)$ & \cellcolor{red!40} $(2,1,3,3)$ \\ \cline{2-6}
                                  & \multirow{3}{*}{$X_{(2,2)}^{2}$} & $X_{(2,2,1)}^{1}$ & \cellcolor{blue!40}$(2,2,1,1)$ & \cellcolor{green!40}$(2,2,1,2)$ &\cellcolor{yellow!40} $(2,2,1,3)$ \\ \cline{3-6}
                                  &                                  & $X_{(2,2,2)}^{1}$ & \cellcolor{green!40}$(2,2,2,1)$ & \cellcolor{yellow!40} $(2,2,2,2)$ & \cellcolor{red!40} $(2,2,2,3)$ \\ \cline{3-6}
                                  &                                  & $X_{(2,2,3)}^{1}$ & \cellcolor{yellow!40}$(2,2,3,1)$ & \cellcolor{red!40} $(2,2,3,2)$ & \cellcolor{blue!40} $(2,2,3,3)$ \\ \cline{2-6}
                                  & \multirow{3}{*}{$X_{(2,3)}^{2}$} & $X_{(2,3,1)}^{1}$ & \cellcolor{green!40}$(2,3,1,1)$ & \cellcolor{yellow!40} $(2,3,1,2)$ & \cellcolor{red!40} $(2,3,1,3)$ \\ \cline{3-6}
                                  &                                  & $X_{(2,3,2)}^{1}$ & \cellcolor{yellow!40} $(2,3,2,1)$ & \cellcolor{red!40}$(2,3,2,2)$ &\cellcolor{blue!40} $(2,3,2,3)$ \\ \cline{3-6}
                                  &                                  & $X_{(2,3,3)}^{1}$ & \cellcolor{red!40}$(2,3,3,1)$ & \cellcolor{blue!40}$(2,3,3,2)$ & \cellcolor{green!40}$(2,3,3,3)$ \\ \cline{2-6}
 \hline
  \end{tabular}%
  }
\end{subtable}%
\hfill
\begin{subtable}[t]{0.48\textwidth}
  \centering
  \caption{Partition of $X_{(1)}^{3}$}
  \renewcommand{\arraystretch}{1.5}
  \resizebox{\linewidth}{!}{%
  \begin{tabular}{|c|c|c|c|c|c|}
  \hline
  \multirow{9}{*}{$X_{(3)}^{3}$} & \multirow{3}{*}{$X_{(3,1)}^{2}$} & $X_{(3,1,1)}^{1}$ &\cellcolor{red!40} $(3,1,1,1)$ & \cellcolor{blue!40}$(3,1,1,2)$ & \cellcolor{green!40}$(3,1,1,3)$ \\ \cline{3-6}
                                  &                                  & $X_{(3,1,2)}^{1}$ &\cellcolor{blue!40} $(3,1,2,1)$ &\cellcolor{green!40} $(3,1,2,2)$ &\cellcolor{yellow!40} $(3,1,2,3)$ \\ \cline{3-6}
                                  &                                  & $X_{(3,1,3)}^{1}$ & \cellcolor{green!40}$(3,1,3,1)$ & \cellcolor{yellow!40} $(3,1,3,2)$ & \cellcolor{red!40} $(3,1,3,3)$ \\ \cline{2-6}
                                  & \multirow{3}{*}{$X_{(3,2)}^{2}$} & $X_{(3,2,1)}^{1}$ & \cellcolor{blue!40}$(3,2,1,1)$ & \cellcolor{green!40}$(3,2,1,2)$ &\cellcolor{yellow!40} $(3,2,1,3)$ \\ \cline{3-6}
                                  &                                  & $X_{(3,2,2)}^{1}$ & \cellcolor{green!40}$(3,2,2,1)$ & \cellcolor{yellow!40} $(3,2,2,2)$ & \cellcolor{red!40} $(3,2,2,3)$ \\ \cline{3-6}
                                  &                                  & $X_{(3,2,3)}^{1}$ & \cellcolor{yellow!40}$(3,2,3,1)$ & \cellcolor{red!40} $(3,2,3,2)$ & \cellcolor{blue!40} $(3,2,3,3)$ \\ \cline{2-6}
                                  & \multirow{3}{*}{$X_{(3,3)}^{2}$} & $X_{(3,3,1)}^{1}$ & \cellcolor{green!40}$(3,3,1,1)$ & \cellcolor{yellow!40} $(3,3,1,2)$ & \cellcolor{red!40} $(3,3,1,3)$ \\ \cline{3-6}
                                  &                                  & $X_{(3,3,2)}^{1}$ & \cellcolor{yellow!40} $(3,3,2,1)$ & \cellcolor{red!40}$(3,3,2,2)$ &\cellcolor{blue!40} $(3,3,2,3)$ \\ \cline{3-6}
                                  &                                  & $X_{(1,3,3)}^{1}$ & \cellcolor{red!40}$(1,3,3,1)$ & \cellcolor{blue!40}$(1,3,3,2)$ & \cellcolor{green!40}$(1,3,3,3)$ \\ \cline{2-6}
                                  \hline
  \end{tabular}%
  }
\end{subtable}%
    \caption{This table presents the partition of the vertex set of the Hamming graph $H(k+1,k)$ for $k=3$, as outlined in the proof of Theorem \ref{th: hamming graphs}.
    Each subtable illustrates the partition of the set $X_{(a)}^{3}$ for each $1 \le a \le 3$, which we refer to as a first level of partition. The second column in each subtable denotes the partition of $X_{a}^{3}$ into sets $X_{(a,b)}^{2}$ for $1 \le b \le 3$, which we refer to as the second level of partition.
    Finally, the third column in each subtable represents the partition of $X^{2}_{(a,b)}$ into sets $X^1_{(a,b,c)}$ for each $1 \le c \le 3$, which we refer to as the third level of partition.
    This table also provides a closed neighborhood-balanced $3$-coloring of $H(4,3)$.}
    \label{tab: partition of vertex set of H(4,3)}
\end{table}

\subsection{\small Non-hereditary property of the class of closed neighborhood balanced $k$-colored graphs}

A family $\mathcal{F}$ of graphs is hereditary if $G \in \mathcal{F}$ and $H$ is an induced subgraph of $G$ together imply that $H \in \mathcal{F}$.
We know that hereditary classes can be characterized by providing a list of forbidden induced subgraphs.
Indeed, a family of graphs is hereditary if and only if it has a forbidden induced subgraph characterization.
Next, we show that the class of \cnbcd graphs is not hereditary.
That is, there is no graph that is a forbidden induced subgraph for the class of \cnbcd graphs.
Note that the results of this section are already known for the particular case $k=2$ (see~\cite{collin-cnbc}).
\begin{theorem}\label{heredity}
    Every graph is an induced subgraph of a \cnbcd graph.
\end{theorem}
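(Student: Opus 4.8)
The plan is to prove the statement constructively: given an arbitrary graph $G$, I will build an explicit supergraph $H$ that contains $G$ as an induced subgraph and on which I can directly exhibit a \cnbc. The guiding intuition is that a single copy of $G$ colored with one color is maximally unbalanced, so I take $k$ disjoint copies of $G$, dedicate the $i$-th copy to color $i$, and then add carefully chosen cross-edges between the copies so that, at every vertex, the $k-1$ foreign copies supply exactly the colors that its home copy is missing.

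Formally, I would set $V(H) = V(G) \times \{1,\dots,k\}$ and color $(v,i)$ with color $i$. The edges of $H$ come in two kinds. Inside a fixed copy $i$ I keep the original edges of $G$: the vertices $(u,i)$ and $(v,i)$ are adjacent iff $uv \in E(G)$. Between two distinct copies $i \ne j$, I join $(v,i)$ to $(u,j)$ exactly when $u \in N_G[v]$ (equivalently, $u=v$ or $uv \in E(G)$). Since $N_G[\cdot]$ is a symmetric relation these cross-edges are well defined, and no self-loops arise because $v \notin N_G(v)$. The first copy, equipped with only its intra-copy edges, is then an induced subgraph of $H$ isomorphic to $G$, which gives the required embedding.

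The heart of the argument, and the step I expect to require the most care, is verifying that this coloring is balanced at every vertex. Fixing a vertex $(v,i)$, within its own copy it sees itself together with the copies of $N_G(v)$, for a total of $1 + d(v) = |N_G[v]|$ vertices of color $i$; in each foreign copy $j \ne i$ it is adjacent precisely to $\{(u,j) : u \in N_G[v]\}$, contributing $|N_G[v]|$ vertices of color $j$. Hence each of the $k$ colors occurs exactly $|N_G[v]| = d(v)+1$ times in the closed neighborhood of $(v,i)$, so the coloring is indeed a \cnbc. The crux is the choice of the cross-edge rule: matching against $N_G[v]$ rather than against all of $V(G)$ is exactly what forces the foreign count $|N_G[v]|$ to coincide with the home count $1+d(v)$, and this is what makes the construction go through for \emph{arbitrary} $G$ regardless of its irregularity.

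Finally, I would dispose of the trivial bookkeeping: as long as $G$ has at least one vertex, every color class is nonempty, so the coloring genuinely uses all $k$ colors, and the empty graph is vacuously an induced subgraph of, say, $K_k$. This shows that every graph is an induced subgraph of a \cnbcd graph, whence no graph can serve as a forbidden induced subgraph and the class fails to be hereditary.
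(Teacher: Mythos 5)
Your construction is exactly the paper's: $k$ color-indexed copies of $G$, with cross-edges $(v,i)(u,j)$ whenever $u\in N_G[v]$ (the $u=v$ case being the paper's third edge set and the $uv\in E(G)$ case its second), and the balance count of $d(v)+1$ per color at each vertex matches the paper's verification. The proposal is correct and takes essentially the same approach.
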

\begin{proof}
    Let $G$ be a graph and write $V(G)=\{ v_1, \dots, v_n\}$.
    Let $H$ be a graph with the vertex set $V(H) = \displaystyle\cup_{j=1}^{k}\{v_i^j : 1\leq i \leq n\}$ and the edge set
    \begin{align*}
    E(H) = &\Big(\displaystyle\cup_{p=1}^{k}\{v_i^pv_j^p : v_iv_j\in E(G)\}\Big) \cup \Big(\displaystyle\cup_{p,q=1, p \ne q}^{k}\{v_i^pv_j^q : v_iv_j \in E(G)\}\Big)\cup\\&\bigg(\cup_{p,q=1,p\not=q}^k\{v_i^pv_i^q;\ 1\leq i\leq n\}\bigg) .
    \end{align*}
    Note that $G$ is an induced subgraph of $H$.
    In graph $H$, color each vertex $v_i^j$ with color $j$, for all $1\leq i \leq n$, where $1\leq j \leq k$.
    Then every vertex of $H$ has an equal number of neighbors of each color in its closed neighborhood and thus $H$ is a \cnbcd graph.
\end{proof}
\begin{corollary}
   The class of \cnbcd graphs is not hereditary. 
\end{corollary}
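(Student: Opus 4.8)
The quickest plan is to read off non-heredity straight from the definition: I want to exhibit a single \cnbcd graph that has an induced subgraph which is not \cnbcd, since a family is non-hereditary precisely when it fails to be closed under taking induced subgraphs. The stronger statement implicit in this subsection --- that there is literally no forbidden induced subgraph for the class --- will then follow from Theorem \ref{heredity}.

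For the direct argument, first I would isolate a graph that is not \cnbcd. The one-vertex graph $K_1$ works: its closed neighborhood has size one, so it cannot contain an equal positive number of vertices in each of the $k \ge 2$ color classes; equivalently, by Lemma \ref{cnbclem1}, the degree $0$ of its vertex fails $0 \equiv -1 \pmod{k}$. On the other hand, the text already notes that $K_k$ is \cnbcd, with each vertex receiving a distinct color, and plainly $K_1$ is an induced subgraph of $K_k$ (indeed of any \cnbcd graph on at least one vertex). Thus $K_k$ lies in the class while its induced subgraph $K_1$ does not, so the class is not closed under induced subgraphs and hence not hereditary.

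To match the forbidden-subgraph language used at the start of the subsection, I would then invoke Theorem \ref{heredity}: every graph $F$ is an induced subgraph of some \cnbcd graph, so no $F$ can ever appear on a list of forbidden induced subgraphs, and therefore the class admits no forbidden induced subgraph characterization at all. The argument requires no computation; the only point needing care is to state the contradiction so that it rules out both a nonempty forbidden list (blocked by Theorem \ref{heredity}) and the empty list (blocked by the witness $K_1$), which between them exhaust every possible forbidden-induced-subgraph characterization.
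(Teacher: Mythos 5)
Your argument is correct, and your direct witness is actually a slightly more self-contained route than the one the paper intends. The paper derives the corollary from Theorem~\ref{heredity} via the equivalence ``hereditary $\iff$ has a forbidden induced subgraph characterization'': since every graph embeds as an induced subgraph of a \cnbcd graph, no forbidden list can exist, and since the class is not all graphs, it cannot be hereditary. Your primary argument instead just exhibits $K_k$ (which is \cnbcd) containing the induced subgraph $K_1$ (which is not \cnbcd, since $0\not\equiv -1\pmod{k}$ for $k\ge 2$ by Lemma~\ref{cnbclem1}); this proves non-heredity directly without invoking Theorem~\ref{heredity} at all, though it of course does not by itself give the stronger ``no forbidden induced subgraph'' statement, for which you correctly fall back on Theorem~\ref{heredity}. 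One point in your favor: you make explicit the step the paper leaves tacit, namely that one must also rule out the empty forbidden list (i.e.\ exhibit \emph{some} graph outside the class); Theorem~\ref{heredity} alone does not do this, and your $K_1$ witness supplies it.
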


\subsection{\small Counting Results}
For a graph $G$ and subsets $X , Y \subseteq V(G)$, we denote the set of edges joining a vertex in $X$ to a vertex in $Y$ by $E(X,Y)$.
In particular, if $X = Y$, then we write $E(X)$ instead of $E(X,X)$.
Also, we denote a subgraph induced by $X \subseteq V(G)$ by $G\left<X\right>$.
Throughout this article, unless stated otherwise, we assume that $c$ is a $k$-coloring of the graph under consideration, where $k \ge 2$, using colors $1, 2,\dots, k$.
We denote the corresponding color classes as $V_1^{c}(G), V_2^{c}(G),\dots, V_k^{c}(G)$  which form a partition of the vertex set $V(G)$ into $k$ disjoint subsets. If the coloring $c$ and the graph $G$ are clear from the context, then we simply write $V_1, V_2,\dots, V_k$.
\begin{theorem}\label{cnbccounting1}
If a graph $G$ admits a \cnbc $c$, then
   $$|E(V_i,V_j)|=\frac{2|E(G)|+|V(G)|}{k^2},$$ and $$|E(V_i,V_i)|=\frac{2|E(G)|+|V(G)|}{2k^2}-\frac{|V_i|}{2}.$$
\end{theorem}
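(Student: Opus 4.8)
The plan is to exploit the local color-count that the balance condition forces at every vertex, and then run a double-counting argument for the cross-class and within-class edge totals. Both claimed identities will fall out of the same bookkeeping.

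First I would record the fundamental per-vertex consequence of the definition. Since $|N[v]| = d(v)+1$ and, by \Cref{def:nbc}, each of the $k$ colors occurs the same number of times in $N[v]$, every color occurs exactly $\frac{d(v)+1}{k}$ times in $N[v]$ (an integer by \Cref{cnbclem1}). Writing $n_j(v) = |N(v)\cap V_j|$ for the number of \emph{open} neighbors of $v$ of color $j$, and accounting for the fact that $v$ itself contributes to its own color class inside $N[v]$, this gives the key local data: if $v \in V_i$, then $n_j(v) = \frac{d(v)+1}{k}$ for every $j \ne i$, while $n_i(v) = \frac{d(v)+1}{k} - 1$.

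Next I would count $|E(V_i,V_j)|$ for $i \ne j$ in two ways. Summing over $V_i$ gives $|E(V_i,V_j)| = \sum_{v\in V_i} n_j(v) = \frac{1}{k}\bigl(\sum_{v\in V_i} d(v) + |V_i|\bigr)$, and summing over $V_j$ gives the analogous expression with the roles of $i$ and $j$ interchanged, since each cross edge is counted exactly once in either sum. Equating the two shows that the quantity $S := \sum_{v\in V_i} d(v) + |V_i|$ is independent of $i$. Summing $S$ over all $k$ color classes recovers $\sum_{v\in V(G)} d(v) + |V(G)| = 2|E(G)| + |V(G)|$ by the handshake lemma, so $kS = 2|E(G)| + |V(G)|$, and therefore $|E(V_i,V_j)| = S/k = \frac{2|E(G)| + |V(G)|}{k^2}$, which is the first identity.

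Finally, for the within-class count I would sum the own-color neighbor counts over $V_i$, where now each edge inside $V_i$ is counted twice: $2|E(V_i,V_i)| = \sum_{v\in V_i} n_i(v) = \frac{1}{k}\bigl(\sum_{v\in V_i} d(v) + |V_i|\bigr) - |V_i| = \frac{S}{k} - |V_i|$. Substituting $S/k = \frac{2|E(G)|+|V(G)|}{k^2}$ and dividing by two yields the second identity. The computations are routine double counting; the one step that genuinely carries the argument — and which I expect to be the only mild obstacle — is the observation that $\sum_{v\in V_i} d(v) + |V_i|$ is the \emph{same} for every color class. This uniformity is exactly what the two-way count of cross edges provides, and it is precisely what forces the final formulas to be independent of the chosen colors $i,j$.
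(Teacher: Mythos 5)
Your proposal is correct and follows essentially the same route as the paper's proof: the per-vertex count $n_j(v)=\frac{d(v)+1}{k}$ (minus one for the own color), the two-way count of cross edges showing that $\sum_{v\in V_i}d(v)+|V_i|$ is independent of $i$, and the handshake within $V_i$ for the second identity. No gaps.
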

\begin{proof}
    Consider the bipartite subgraph $H$ of $G$, induced by the set $E(V_i,V_j)$ in the bipartition $V_i\cup V_j$. Then for any $v\in V(H)$,
    \begin{equation*}
        d_H(v)=\frac{(d_G(v)+1)}{k}.
    \end{equation*}
    Therefore, counting $|E(V_i,V_j)|$ by summing degrees over each part of the bipartite graph, we get
    \begin{equation}\label{cnbceq1}
        |E(V_i,V_j)|=\sum_{v\in V_i}\frac{(d_G(v)+1)}{k}=\sum_{v\in V_j}\frac{(d_G(v)+1)}{k},
    \end{equation}
    which implies 
    \begin{equation*}
        \sum_{v\in V_i}(d_G(v)+1)=\sum_{v\in V_j}(d_G(v)+1).
    \end{equation*}
    Thus,
    \begin{equation}\label{cbnceq2}
       \sum_{v\in V_i}d_G(v)+|V_i|=\sum_{v\in V_j}d_G(v)+|V_j|. 
    \end{equation}
    Now, 
    \begin{equation*}
        \sum_{v\in V_1}d_G(v)+\dots+\sum_{v\in V_k}d_G(v)+|V_1|+\dots+|V_k|=2|E(G)|+|V(G)|.
    \end{equation*}
    Using Equations (\ref{cnbceq1}) and (\ref{cbnceq2}) , we get 
\begin{align*}
    k\bigg(\sum_{v\in V_i}d_G(v)+|V_i|\bigg)&=2|E(G)|+|V(G)|,\\
    k^2 |E(V_i,V_j)|&=2|E(G)|+|V(G)|,\\\\
    |E(V_i,V_j)|&=\frac{2|E(G)|+|V(G)|}{k^2}.
\end{align*}
This proves the first equality. For the second equality, let $G\left<V_i \right>$ be the subgraph induced by the set $V_i$. For all $v\in V_i$, we have
\begin{equation*}
    d_{G\left< V_i\right>}(v)=\frac{d_G(v)+1}{k}-1=\frac{d_G(v)+1-k}{k}.
\end{equation*}
Therefore, from Equation (\ref{cnbceq1}) and the first equality of the theorem, we obtain
\begin{align*}
    2|E(V_i,V_i)|&=\sum_{v\in V_i}d_{G\left<V_i\right>}(v)\\
    &=\sum_{v\in V_i}\frac{d_G(v)+1-k}{k}\\
    &= \sum_{v\in V_i}\bigg[\frac{d_G(v)+1-k}{k}+1-1\bigg]\\
    &=\sum_{v\in V_i}\bigg[\frac{d_G(v)+1}{k}-1\bigg]\\
    &=|E(V_i,V_j)|-|V_i|\\
    &=\frac{2|E(G)|+|V(G)|}{k^2}-|V_i|.
\end{align*}
Therefore, 
\begin{equation*}
    |E(V_i,V_i)|=\frac{2|E(G)|+|V(G)|}{2k^2}-\frac{|V_i|}{2}.
\end{equation*}
This proves the second equality and completes the proof.
\end{proof}
\begin{corollary}
    For any graph $G$ admitting a \cnbc $c$, if $|V_i|=|V_j|$, then $|E(V_i,V_i)|=|E(V_j,V_j)|.$
\end{corollary}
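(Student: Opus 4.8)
The plan is to read off both quantities directly from the second identity established in Theorem \ref{cnbccounting1}. That theorem gives, for every color class $V_i$ of a \cnbc $c$, the closed-form expression
\[
|E(V_i,V_i)| = \frac{2|E(G)|+|V(G)|}{2k^2} - \frac{|V_i|}{2},
\]
and crucially the first summand on the right-hand side depends only on the global quantities $|E(G)|$, $|V(G)|$, and $k$, all of which are independent of the index $i$. The only index-dependent term is $-|V_i|/2$.

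First I would apply this identity once with index $i$ and once with index $j$, obtaining two equations whose right-hand sides differ only in their final terms. Subtracting one from the other, I would get
\[
|E(V_i,V_i)| - |E(V_j,V_j)| = \frac{|V_j|-|V_i|}{2}.
\]
Invoking the hypothesis $|V_i|=|V_j|$ then forces the right-hand side to vanish, yielding $|E(V_i,V_i)|=|E(V_j,V_j)|$, as required.

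Since the entire combinatorial content is already packaged into Theorem \ref{cnbccounting1}, there is no genuine obstacle here; the corollary is a one-line substitution. The only point worth stating explicitly in the write-up is that the common term $\frac{2|E(G)|+|V(G)|}{2k^2}$ is identical across all color classes, so that equal class sizes translate immediately into equal numbers of monochromatic edges within those classes.
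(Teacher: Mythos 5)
Your proposal is correct and matches the paper's intended argument exactly: the corollary is stated without proof precisely because it is an immediate substitution into the second identity of Theorem~\ref{cnbccounting1}, whose only index-dependent term is $-|V_i|/2$. Nothing further is needed.
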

\begin{theorem}\label{regcountingcnbc}
    If $G$ is an $r$-regular graph admitting a \cnbc $c$, then $|V_i|=\frac{|V(G)|}{k}$, $|E(V_i,V_j)|=\frac{(r+1)|V(G)|}{k^2}$ and $|E(V_i,V_i)|=\frac{(r+1-k)|V(G)|}{2k^2}.$ 
\end{theorem}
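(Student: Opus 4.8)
The plan is to obtain all three identities as direct specializations of Theorem~\ref{cnbccounting1}, using the two consequences of $r$-regularity: the handshaking lemma gives $2|E(G)| = r|V(G)|$, so the recurring numerator $2|E(G)| + |V(G)|$ simplifies to $(r+1)|V(G)|$; and every degree appearing in the earlier proof equals $r$.

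First I would read off the cross-class edge count. Substituting $2|E(G)| = r|V(G)|$ into the first formula of Theorem~\ref{cnbccounting1} immediately gives $|E(V_i, V_j)| = \frac{r|V(G)| + |V(G)|}{k^2} = \frac{(r+1)|V(G)|}{k^2}$, which is the second claimed identity.

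Next I would deduce that the color classes are balanced. By Equation~(\ref{cnbceq1}) from the proof of Theorem~\ref{cnbccounting1}, $|E(V_i, V_j)| = \sum_{v \in V_i} \frac{d_G(v)+1}{k}$, and since $G$ is $r$-regular this sum equals $\frac{(r+1)|V_i|}{k}$. Equating it with the value $\frac{(r+1)|V(G)|}{k^2}$ just obtained and cancelling the factor $r+1$ yields $|V_i| = \frac{|V(G)|}{k}$, the first identity. Equivalently, one could invoke Equation~(\ref{cbnceq2}), which under regularity reads $(r+1)|V_i| = (r+1)|V_j|$ and forces all classes to share a common size.

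Finally, plugging both $2|E(G)| = r|V(G)|$ and $|V_i| = |V(G)|/k$ into the second formula of Theorem~\ref{cnbccounting1} gives $|E(V_i,V_i)| = \frac{(r+1)|V(G)|}{2k^2} - \frac{|V(G)|}{2k}$; writing the second term over the denominator $2k^2$ as $\frac{k|V(G)|}{2k^2}$ and combining produces $\frac{(r+1-k)|V(G)|}{2k^2}$, the third identity. Since every step is a substitution into an already-established formula, I anticipate no genuine obstacle; the only place where regularity is truly indispensable is the balancing of the color classes, which can fail for general graphs (in Theorem~\ref{cnbccounting1} the quantity $|E(V_i,V_i)|$ legitimately depends on $|V_i|$, so without regularity the classes need not be equal in size).
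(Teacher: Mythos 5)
Your proposal is correct and follows essentially the same route as the paper: both specialize the counting identities behind Theorem~\ref{cnbccounting1} to the $r$-regular case, the only cosmetic difference being order of deduction (you extract $|E(V_i,V_j)|$ from the general formula first and then infer $|V_i|=|V(G)|/k$, while the paper first double-counts $E(V_i,V_j)$ from both sides to get $|V_i|=|V_j|$ and then computes the edge counts). The final computation of $|E(V_i,V_i)|$ via the second formula of Theorem~\ref{cnbccounting1} is identical in both arguments.
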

\begin{proof}
   Each vertex colored $i$ has $\frac{r+1-k}{k}$ also colored $i$ and $\frac{r+1}{k}$ neighbors of each of the other $k-1$ colors. First, we count the edges with one endpoint colored $i$ and the other endpoint colored $j$, and we get$$
       |E(V_i,V_j)|=\sum_{v\in V_i}\frac{r+1}{k}=\bigg(\frac{r+1}{k}\bigg)|V_i|.$$
   When we count the same quantity at the endpoint colored $j$, we get $$|E(V_i,V_j)|=\bigg(\frac{r+1}{k}\bigg)|V_j|.$$
   Thus, $|V_i|=|V_j|$ and as $|V_i|+\dots+|V_k|=|V(G)|$, we have $|V_i|=\frac{|V(G)|}{k}$ and hence $$|E(V_i,V_j)|=\frac{(r+1)|V(G)|}{k^2}.$$\\
   Further using Theorem \ref{cnbccounting1} and $|V_i|$, we have
   \begin{align*}
       |E(V_i,V_i)|&=\frac{2|E(G)|+|V(G)|}{2k^2}-\frac{|V(G)|}{2k}\\
       &=\frac{r|V(G)|+|V(G)|-k|V(G)|}{2k^2}\\
       &=\frac{(r+1-k)|V(G)|}{2k^2}.
   \end{align*}
\end{proof}
\begin{corollary}\label{regcountingcor}
    If $G$ is an $r$-regular \cnbcd graph, then either $|V(G)|=0 \pmod {k^2}$ or $r\equiv (k-1) \pmod {k^2}$. 
\end{corollary}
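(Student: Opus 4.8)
The plan is to read the corollary off directly from the edge-count formulas of Theorem~\ref{regcountingcnbc}. Since $G$ is an $r$-regular \cnbcd graph, that theorem gives, for every color class $V_i$,
\[
|E(V_i,V_i)| = \frac{(r+1-k)\,|V(G)|}{2k^2}.
\]
The left-hand side is the number of edges inside the induced subgraph $G\langle V_i\rangle$, so it is a non-negative integer. Hence $2k^2$ divides $(r+1-k)\,|V(G)|$, and in particular
\[
k^2 \mid (r+1-k)\,|V(G)|.
\]
This single divisibility is the engine of the whole argument; everything else is unpacking it into the stated disjunction.

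First I would record the two facts that are already available. Because $G$ is $r$-regular, Lemma~\ref{cnbclem1} forces $r \equiv -1 \pmod{k}$, so $k \mid (r+1)$ and therefore $k \mid (r+1-k)$. Likewise, Theorem~\ref{regcountingcnbc} gives $|V_i| = |V(G)|/k$, so $k \mid |V(G)|$. Writing $r+1-k = k\alpha$ and $|V(G)| = k\beta$ turns the displayed divisibility into a relation between $\alpha$ and $\beta$, and the goal $k^2 \mid |V(G)|$ or $r \equiv k-1 \pmod{k^2}$ translates into $k \mid \beta$ or $k \mid \alpha$. I would then try to force one of these two alternatives.

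The main obstacle is precisely this last step: passing from divisibility of the \emph{product} $(r+1-k)\,|V(G)|$ by $k^2$ to the disjunction ``$k^2 \mid |V(G)|$ or $k^2 \mid (r+1-k)$'' (the latter being exactly $r \equiv k-1 \pmod{k^2}$). For a composite modulus this inference is not automatic, since the two required factors of $k$ could split, one into $(r+1-k)$ and one into $|V(G)|$; so the real content is to rule this out rather than the routine manipulation of the counting formulas. I would therefore analyze $\gcd(r+1-k,k^2)$ together with $\gcd(|V(G)|,k^2)$, and, in the borderline cases where the naive split is not immediately excluded, bring in the additional integrality coming from the companion formula $|E(V_i,V_j)| = (r+1)\,|V(G)|/k^2$ of Theorem~\ref{regcountingcnbc}, which must also be a non-negative integer, as well as the parity constraint forced by the factor of $2$ in $|E(V_i,V_i)|$. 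Making this case analysis airtight is where I expect the difficulty to concentrate, and I would pay particular attention to whether the claimed dichotomy genuinely holds for every composite $k$ or whether the structural constraints are really only strong enough to yield the conclusion after this extra input.
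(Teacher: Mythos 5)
Your instinct here is exactly right, and the place where you stall is not a defect of your write-up: the step you flag as ``the real content'' --- passing from $2k^2 \mid (r+1-k)\,|V(G)|$ to the disjunction ``$k^2 \mid |V(G)|$ or $k^2 \mid (r+1-k)$'' --- genuinely cannot be carried out, because the corollary as stated is false for $k \ge 3$. The paper's own proof is a two-line argument that makes precisely the invalid leap you identify: it observes that $\frac{(r+1-k)|V(G)|}{2k^2}$ is an integer and concludes directly that one of the two factors is a multiple of $k^2$. As you note, divisibility of a product by $k^2$ does not distribute over the factors, and here the two guaranteed factors of $k$ (one from $k \mid r+1$ via Lemma~\ref{cnbclem1}, one from $k \mid |V(G)|$ via Theorem~\ref{regcountingcnbc}) really can split between them. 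A concrete counterexample is $G = K_{2k}$ for any $k \ge 3$: it admits a closed neighborhood balanced $k$-coloring with two vertices of each color (every closed neighborhood is all of $V$), it is $r$-regular with $r = 2k-1$, and the integrality check passes since $\frac{(r+1-k)|V(G)|}{2k^2} = \frac{k\cdot 2k}{2k^2} = 1$; yet $|V(G)| = 2k \not\equiv 0 \pmod{k^2}$ and $r - (k-1) = k \not\equiv 0 \pmod{k^2}$. For $k=3$ this is $K_6$, for $k=4$ it is $K_8$.

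What the integrality of $|E(V_i,V_i)|$ actually yields, following your own reduction $r+1-k = k\alpha$, $|V(G)| = k\beta$, is only $2 \mid \alpha\beta$, i.e.\ the weaker (and correct) dichotomy ``$|V(G)| \equiv 0 \pmod{2k}$ or $r \equiv k-1 \pmod{2k}$.'' This coincides with the stated corollary exactly when $k=2$ (since then $2k = k^2$), which is why the $k=2$ version from the earlier literature is sound, but for $k \ge 3$ the modulus $k^2$ is too strong. Your hope of rescuing the full claim via the companion formula $|E(V_i,V_j)| = (r+1)|V(G)|/k^2$ also cannot succeed: in the $K_{2k}$ example that quantity equals $2k\cdot 2k/k^2 = 4$, an integer, so no contradiction is available. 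So the honest conclusion of your analysis is that the corollary should be restated with modulus $2k$ (or restricted to $k=2$), not that a cleverer case analysis is missing.
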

\begin{proof}
    By Theorem \ref{regcountingcnbc}, for any \cnbc $c$ of $G$, we have $|E(V_i,V_i)|=\frac{(r+1-k)|V(G)|}{2k^2}$, which therefore must be an integer. Thus, at least one of $|V(G)|$ or $(r+1-k)$ must be a multiple of $k^2.$ 
\end{proof}

\subsection{\small Closed neighborhood balanced $k$-colored graphs having unequal sizes of color classes}
We know that for \cnbcd regular graphs, the color classes are of the same size (see Corollary \ref{regcountingcnbc}).
This need not be the case in general.
In this section, we give a way to start with a \cnbcd graph and construct a new \cnbcd graph that has fewer vertices of one color compared to the other colors.
Similar constructions for $k=2$ are shown in ~\cite{collin-cnbc}.
\begin{definition}
    Given a \cnbc{} of a graph $G$ and a vertex $z \in V(G)$ with color $k$, a \emph{$(3k-2)$-vertex addition at $z$} is defined as follows. We add vertices $u_1,u_2,\dots,u_k$, each adjacent to $z$, and for every $1 \leq i \leq k-1$ the vertex $u_i$ is adjacent to $u_{i+1},u_{i+2},\dots,u_{k-1}$. Furthermore, we introduce vertices $v_1,v_2,\dots,v_{k-1}$ and $v_1',v_2',\dots,v_{k-1}'$, all of which are adjacent to $u_k$. Among these, for each $1 \leq i \leq k-2$, the vertex $v_i$ is adjacent to $v_{i+1},v_{i+2},\dots,v_{k-1}$, and similarly $v_i'$ is adjacent to $v_{i+1}',v_{i+2}',\dots,v_{k-1}'$. Finally, we assign colors by letting $u_i$ receive color $i$ for $1 \leq i \leq k$, and both $v_i$ and $v_i'$ receive color $i$ for $1 \leq i \leq k-1$.
\end{definition}
It is straightforward to verify that a graph obtained by $(3k-2)$-vertex addition as defined above from a \cnbcd is \cnbcd.
Also, the $(3k-2)$-vertex addition adds one additional vertex of one color and two additional vertices, each of the other $k-1$ colors.
Hence, we have the following proposition.

\begin{proposition}\label{vertexaddcnbc}
    Given a \cnbc of a graph $G$, let $G'$ be the graph resulting from a $(3k-2)$-vertex addition at a vertex $z$ of $G$. Then $G'$ is a \cnbcd graph and $G'$ has one additional vertex of $z$'s color and three additional vertices of each of the other $k-1$ colors.
\end{proposition}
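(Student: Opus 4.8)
The plan is to show that the coloring prescribed in the definition of the $(3k-2)$-vertex addition is already a \cnbc of $G'$, so that no recoloring of the old vertices is required. Since the colors of $u_1,\dots,u_k$, of $v_1,\dots,v_{k-1}$, and of $v_1',\dots,v_{k-1}'$ are fixed by the definition, and the coloring on $V(G)$ is inherited from the given \cnbc of $G$, the only thing left to verify is that every vertex of $G'$ has an equal number of vertices of each color in its closed neighborhood. I would establish this by a case analysis over the types of vertices, exploiting the fact that the added gadget is built from three rainbow cliques on $k-1$ vertices, namely $\{u_1,\dots,u_{k-1}\}$, $\{v_1,\dots,v_{k-1}\}$, $\{v_1',\dots,v_{k-1}'\}$, together with the ``hub'' vertex $u_k$ that joins $z$ to both $v$-cliques.

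The verification would split into the following cases. First, any old vertex $w\in V(G)\setminus\{z\}$ is adjacent to no new vertex, so $N_{G'}[w]=N_G[w]$ and its balance is inherited directly from the hypothesis. Second, the closed neighborhood of $z$ gains exactly $\{u_1,\dots,u_k\}$, one vertex of each color $1,\dots,k$; since $N_G[z]$ was balanced, adding one vertex of every color keeps it balanced. Third, for $1\le i\le k-1$ the vertex $u_i$ has $N_{G'}[u_i]=\{u_1,\dots,u_{k-1}\}\cup\{z\}$, which contains one vertex of each of the colors $1,\dots,k-1$ together with $z$ of color $k$, hence exactly one of every color; the symmetric statement holds for each $v_i$ and each $v_i'$, whose closed neighborhood is its own rainbow $(k-1)$-clique together with $u_k$ of color $k$. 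The only remaining vertex is $u_k$, whose closed neighborhood is $\{u_k,z\}\cup\{v_1,\dots,v_{k-1}\}\cup\{v_1',\dots,v_{k-1}'\}$, giving two vertices of color $k$ (namely $u_k$ and $z$) and two vertices of each color $1,\dots,k-1$ (namely $v_i$ and $v_i'$), so it is balanced as well.

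Finally I would count the added vertices by color: color $k$, which is the color of $z$, receives only $u_k$, whereas each color $1,\dots,k-1$ receives the three vertices $u_i,v_i,v_i'$; this accounts for all $1+3(k-1)=3k-2$ new vertices and yields the stated counts. I expect the design of the gadget, rather than the verification, to be the only subtle point: the hub $u_k$ is adjacent to $z$, so its closed neighborhood already contains two vertices of color $k$, which is precisely why two disjoint rainbow cliques $\{v_i\}$ and $\{v_i'\}$ (rather than one) are attached to $u_k$, and why $u_k$ is deliberately excluded from the clique on $u_1,\dots,u_{k-1}$. Confirming that these adjacency choices produce sizes $|N_{G'}[v]|\equiv 0\pmod{k}$, as forced by Lemma \ref{cnbclem1}, is the natural sanity check that motivates the construction.
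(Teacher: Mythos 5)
Your verification is correct and is exactly the case analysis that the paper declares ``straightforward to verify'' and omits: old vertices other than $z$ are untouched, $z$ gains the rainbow set $\{u_1,\dots,u_k\}$, each $u_i$ ($i<k$), $v_i$, $v_i'$ sees its own rainbow $(k-1)$-clique plus one vertex of color $k$, and the hub $u_k$ sees two vertices of every color; the color count $1+3(k-1)=3k-2$ matches the proposition. No difference in approach, and your remark about why two disjoint $v$-cliques are needed at $u_k$ is a correct reading of the construction.
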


\begin{corollary}
    There exist \cnbcd graphs with \cnbc that have arbitrarily fewer vertices of one color than the other $k-1$ colors. Moreover, every \cnbcd graph is an induced subgraph of such a graph. 
\end{corollary}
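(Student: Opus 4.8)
The plan is to obtain both assertions by iterating the $(3k-2)$-vertex addition of Proposition~\ref{vertexaddcnbc}. First I would record the bookkeeping. A single $(3k-2)$-vertex addition performed at a vertex $z$ of color $k$ leaves every color class unchanged except that it adds one vertex of color $k$ and three vertices of each of the remaining $k-1$ colors. Hence, if before the operation the color classes have sizes $a_1,\dots,a_k$, then after $t$ successive such additions (each carried out at a vertex of color $k$) the class of color $k$ has $a_k+t$ vertices while every other class $i$ has $a_i+3t$ vertices. Consequently the gap $(a_i+3t)-(a_k+t)=(a_i-a_k)+2t$ tends to infinity with $t$, so for $t$ large enough color $k$ is strictly smaller than every other color and the deficit is as large as we please.

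Next I would check that the iteration is legitimate, i.e.\ that a vertex of color $k$ is always available. By Lemma~\ref{cnbclem1} every vertex of a \cnbcd graph has degree $\equiv -1 \pmod{k}$, hence degree at least $k-1 \ge 1$, so a \cnbcd graph has no isolated vertices and therefore uses every one of the $k$ colors; in particular the starting vertex $z$ of color $k$ exists. Moreover each application of the addition creates a fresh vertex $u_k$ of color $k$ (and retains $z$), so after every step there is still a color-$k$ vertex at which to perform the next addition. By Proposition~\ref{vertexaddcnbc} each intermediate graph is again \cnbcd, so the process may be repeated indefinitely.

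For the first assertion I would start from a concrete \cnbcd graph, for instance $K_k$ with its rainbow coloring, where each color class is a singleton, and apply the construction above $t$ times; the resulting graph is \cnbcd and has $2t$ more vertices of each color $i \ne k$ than of color $k$, which can be made arbitrarily large. For the second assertion I would start instead from an arbitrary \cnbcd graph $G$ (with its given \cnbc and a color-$k$ vertex $z$, guaranteed as above) and again apply $t$ additions. Since each addition introduces only new vertices together with edges incident to the newly added vertices and to $z$, and never an edge between two vertices already present in $G$, the subgraph induced on $V(G)$ is unchanged, so $G$ remains an induced subgraph of every graph in the sequence. Choosing $t$ large then exhibits $G$ as an induced subgraph of a \cnbcd graph in which one color is arbitrarily smaller than the others.

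I expect the only real care to be in the last step: verifying that no new edge is ever added inside $V(G)$, so that the induced (rather than merely the subgraph) relation is preserved through all $t$ iterations, and in confirming that the supply of color-$k$ vertices never runs out so that the recursion can indeed be carried out an unbounded number of times.
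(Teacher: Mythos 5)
Your proposal is correct and follows essentially the same route as the paper: iterate the $(3k-2)$-vertex addition of Proposition~\ref{vertexaddcnbc} starting from an arbitrary \cnbcd graph, so that after $t$ steps the chosen color class trails each of the others by an extra $2t$ vertices while the original graph stays induced. Your additional checks (that a vertex of the chosen color always exists and that no edge is ever added inside $V(G)$) are details the paper leaves implicit, but they do not change the argument.
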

\begin{proof}
    Let $G$ be a \cnbcd graph and fix a \cnbc $c$ of $G$. Consider a vertex having color, say $1$. By Proposition \ref{vertexaddcnbc}, a graph $G'$ obtained from $G$ by $(3k-2)$-vertex addition is \cnbcd graph and has one additional vertex of color $1$, and three additional vertices of each of the other $k-1$ colors.
    We again do $(3k-2)$-vertex addition at $G'$ (at a vertex having color $1$) to obtain another graph $G''$ that now has two additional vertices (than $G$) of color $1$, and six additional vertices (than $G$) of other $k-1$ colors.
    Repeating such $(3k-2)$-vertex additions, finally, we obtain a \cnbcd graph that has arbitrarily fewer vertices of color $1$ as compared to vertices of other $k-1$ colors.
\end{proof}
We now show an example of a graph obtained by $(3k-2)$-vertex addition to a \cnbcd graph $K_k$. This example will be referred to in the next subsection when providing counterexamples to certain theorems.

\begin{example}\label{constructionofHk}
    In this example, we construct a graph $H_k$ having order $4k-2$ that is a $(3k-2)$-vertex addition to $K_k$ and is a \cnbcd graph. Let the vertices of $K_k$ be $v_1,v_2,\dots,v_k$, and make a $(3k-2)$-vertex addition at $v_k$ with vertices $u_1,u_2,\dots,u_k$,$a_1,a_2,\dots,a_{k-1}$ and $b_1,b_2,\dots,b_{k-1}$. Since $K_k$ is a \cnbc graph, by Proposition \ref{vertexaddcnbc}, $H_k$ is a \cnbcd graph.
\end{example}

\subsection{\small Operations with \cnbcd graphs}
In this subsection, we study the \cnbcd graphs under various graph operations. The definitions of the graph operations are mentioned before stating the related result.
\begin{theorem}\label{complementcoloring}
    A coloring $c$ satisfying $|V_1|=|V_2|=\dots=|V_k|$ is a \nbc of a graph $G$ if and only if it is a \cnbc of $\overline{G}$.
\end{theorem}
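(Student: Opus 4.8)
The plan is to reduce the statement to a single set-theoretic identity relating neighborhoods in $G$ and in $\overline{G}$, after which the equivalence of the two coloring conditions becomes a routine count. The crucial observation I would record first is that, for every vertex $v$, the closed neighborhood of $v$ in the complement is exactly the complement of the open neighborhood of $v$ in $G$:
\[
N_{\overline{G}}[v] = V(G) \setminus N_G(v).
\]
Indeed, by definition of the complement we have $N_{\overline{G}}(v) = V(G) \setminus (\{v\} \cup N_G(v))$, and adjoining $v$ back to this set removes it from the excluded part, leaving precisely the vertices that are not neighbors of $v$ in $G$ (note that $v \in N_{\overline{G}}[v]$ while $v \notin N_G(v)$).

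Next I would invoke the hypothesis that all color classes have the same size. Writing $n = |V(G)|$, this gives $|V_\ell| = n/k$ for each color $\ell$. Fix a vertex $v$ and a color $\ell$, and set $m_\ell(v) = |N_G(v) \cap V_\ell|$, the number of neighbors of $v$ in $G$ receiving color $\ell$. Counting the color-$\ell$ vertices inside $N_{\overline{G}}[v] = V(G) \setminus N_G(v)$ then yields
\[
|V_\ell \cap N_{\overline{G}}[v]| = |V_\ell| - m_\ell(v) = \frac{n}{k} - m_\ell(v).
\]

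The equivalence now follows by reading off both conditions against this formula. The coloring $c$ is a \cnbc of $\overline{G}$ precisely when, for every $v$, the quantities $\tfrac{n}{k} - m_\ell(v)$ agree across all colors $\ell$; since $n/k$ is a constant independent of $\ell$, this holds if and only if the quantities $m_\ell(v)$ agree across all colors $\ell$ for every $v$, which is exactly the statement that $c$ is a \nbc of $G$. As the identity and the count are reversible, this single equivalence delivers both directions at once.

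I do not anticipate a genuine obstacle here; the proof is essentially immediate once the complement identity is in hand. The one point requiring care is the bookkeeping of $v$ itself — ensuring $v$ sits in $N_{\overline{G}}[v]$ but not in $N_G(v)$, so that the shift by the constant $n/k$ is exactly correct — together with the observation that the equal-size hypothesis is what makes that shift color-independent. Were the classes allowed unequal sizes, the shift would become color-dependent and the equivalence would break, which is consistent with the theorem being stated only for colorings satisfying $|V_1| = |V_2| = \dots = |V_k|$.
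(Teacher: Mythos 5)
Your proof is correct and rests on the same complement-counting idea as the paper's, but you package it more cleanly: the identity $N_{\overline{G}}[v] = V(G) \setminus N_G(v)$ handles the vertex $v$ itself uniformly, dispenses with the paper's case analysis on the color of $v$, and yields both directions of the equivalence at once since the shift by the constant $|V_\ell| = n/k$ is reversible. The role of the equal-size hypothesis --- making that shift color-independent --- is identified exactly as in the paper.
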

\begin{proof}
Suppose $c$ is a neighborhood balanced $k$-coloring  of $G$ with 
$
|V_1|=|V_2|=\cdots=|V_k|=p.
$
Take any vertex $v \in V(G)$. By the definition of the coloring $c$, $|N_G(v)\cap V_i| = j$ for some $j$ and for all $1 \leq i \leq k$.
 
If $v$ is colored $1$, then in $\overline{G}$ the vertex $v$ has 
$p-j$ neighbors in each of the color classes $V_2,V_3,\dots,V_k$, 
and 
$p-j-1$ neighbors in $V_1$. 
Hence, for every $1\leq i \leq k$, we obtain
$
|N_{\overline{G}}[v]\cap V_i| = p-j.
$
A similar argument holds when $v$ has color $2,3,\dots,k$. Therefore, $c$ is a closed neighborhood balanced $k$-coloring of $\overline{G}$.

\medskip

Conversely, suppose $c$ is a \cnbc\ of $\overline{G}$ with 
$|V_1|=|V_2|=\cdots=|V_k|=p$.
Take any vertex $v \in V(\overline{G})$. By definition,$|N_{\overline{G}}[v]\cap V_i| = j $, for some $j$ and for all $1 \leq i \leq k$.

If $v$ has color $1$, then $v$ has $(j-1)$ neighbors of color $1$ in $\overline{G}$, which implies
$|N_G(v)\cap V_1| = p-j$.
Moreover, $v$ has $j$ neighbors in $\overline{G}$ belonging to each of the color classes $V_2,V_3,\dots,V_k$, and therefore
$|N_G(v)\cap V_i| = p-j$ for all $2 \leq i \leq k$.
A similar argument applies to vertices of color $2,3,\dots,k$. Thus $c$ is an \nbc\ of $G$.
\end{proof}

The next example shows that the hypothesis that $|V_1|=|V_2|=\dots=|V_k|$ is necessary. Recall the definition of $(2k-1)$-vertex addition used by Almeida et al. \cite{knbc} to construct neighborhood balanced graphs having unequal sizes of color classes. We shall use this operation to construct the example we require.
\begin{example}
    In this example, we construct a graph $M_k$ of order $(4k-1)$, which is a \nbcd graph and whose complement is not a \cnbcd graph. Start with $\overline{K_{2k}}$ that has a \nbc that has two vertices of each color. Make a $(2k-1)$-vertex addition at one of these $2k$-vertices. As shown in \cite{knbc}, the resulting graph $M_k$ is a \nbcd graph, and has a \nbc with three vertices of one color and four vertices each of the other $(k-1)$ colors. Each vertex of $M_k$ has a degree multiple of $k$ and $(4k-1)$ is not a multiple of $k$, so each vertex in $\overline{M_k}$ has a degree not congruent to $-1\pmod{k}$ and hence $\overline{M_k}$ is not a \cnbcd graph.
\end{example}
The strong product of graphs $G$ and $H$, denoted by $G \boxtimes H$, is the graph with vertex set $V(G)\times V(H)$ where two vertices $(g,h)$ and $(g',h')$ are adjacent in $G \boxtimes H$ if and only if $(1)$ $g=g'$ and $hh'\in E(H)$ or $(2)$ $h=h'$ and $gg'\in E(G)$ or $(3)$ $gg'\in E(G)$ and $hh'\in E(H)$. We call these the first, second, and third types of edges.
\begin{theorem}
    If $G$ is a \cnbcd graph, and $H$ is any graph, then $G \boxtimes H$ is a \cnbcd graph.
\end{theorem}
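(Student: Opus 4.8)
The plan is to exploit the defining feature of the strong product, namely that the closed neighborhood factorizes as a product. Concretely, I would first verify that for every vertex $(g,h) \in V(G \boxtimes H)$,
$$N_{G \boxtimes H}[(g,h)] = N_G[g] \times N_H[h].$$
This follows from a direct check against the three edge types together with the diagonal: a vertex $(g',h')$ lies in the left-hand side precisely when $g' \in N_G[g]$ and $h' \in N_H[h]$. Indeed, the case $g'=g,\ h'=h$ gives the vertex $(g,h)$ itself; the case $g'=g$ with $h'h \in E(H)$ gives a first-type edge; the case $h'=h$ with $g'g \in E(G)$ gives a second-type edge; and the case $g'g \in E(G),\ h'h \in E(H)$ gives a third-type edge. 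No other vertices arise, so the product neighborhood is reproduced exactly.

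Next, given a \cnbc{} $c$ of $G$, I would color $G \boxtimes H$ by pulling $c$ back along the projection onto the first coordinate; that is, I set $c'(g,h) := c(g)$ for every $h \in V(H)$. Since $c$ is a \cnbc{}, every color appears in each closed neighborhood of $G$ and hence every color class of $G$ is nonempty, which guarantees that $c'$ uses all $k$ colors on $G \boxtimes H$.

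The verification then reduces to a single product count. Using the factorization above, for any color $i$ we have
$$\big|N_{G \boxtimes H}[(g,h)] \cap V_i^{c'}\big| = \big|N_G[g] \cap V_i^{c}\big| \cdot \big|N_H[h]\big|.$$
Because $c$ is a \cnbc{} of $G$, the factor $\big|N_G[g] \cap V_i^{c}\big|$ equals $(d_G(g)+1)/k$, which is independent of $i$, while the factor $\big|N_H[h]\big|$ does not involve $i$ at all. Hence this count is identical for every color $i$, so each closed neighborhood of $G \boxtimes H$ is balanced and $c'$ is a \cnbc{} of $G \boxtimes H$.

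I do not anticipate a genuine obstacle here: the whole argument rests on the closed-neighborhood factorization, and once that identity is established the color count is immediate, since $\big|N_H[h]\big|$ acts as a common multiplier across all $k$ color classes and therefore preserves their equality. The only step demanding a little care is the initial case analysis confirming that the three edge types plus the diagonal produce exactly $N_G[g] \times N_H[h]$, with neither extra nor missing vertices; everything after that is a routine multiplication.
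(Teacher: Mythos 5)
Your proof is correct and takes essentially the same approach as the paper: both color $G \boxtimes H$ by pulling back the \cnbc of $G$ along the first projection and then exploit the fact that closed neighborhoods in the strong product factor as $N_G[g]\times N_H[h]$. The only difference is presentational — the paper verifies balance slice by slice over the copies $W_{h'}$ for $h'\in N_H[h]$, whereas you collapse this into the single product count $\lvert N_G[g]\cap V_i^{c}\rvert\cdot\lvert N_H[h]\rvert$, which is a slightly cleaner formulation of the same argument.
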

\begin{proof}
   For each $h \in V(H)$, define
\[
W_h = \{(g,h) : g \in V(G)\}.
\]
The set $W_h$ induces a copy of $G$ as a subgraph of $G \boxtimes H$. Color each $W_h$ using the given \cnbc\ of $G$. We claim that this yields a \cnbc\ of $G \boxtimes H$.

Type one neighbors lie in $W_h$. Since $W_h$ is colored with a \cnbc\ of $G$, the closed neighborhood $N_{W_h}[(g,h)]$ is balanced.  

For type two and three neighbors: for each $h' \in V(H)$ with $hh' \in E(H)$, the neighbors of $(g,h)$ inside $W_{h'}$ are precisely those in the closed neighborhood $N_{W_{h'}}[(g',h')]$, corresponding to types two and three. As $W_{h'}$ was colored using a \cnbc\ of $G$, this neighborhood is colored equally using each of the $k$ colors.

Thus, for every $h' \sim h$, all neighbors in $W_{h'}$ are colored equally using each of the $k$ colors. Combining this with the neighbors from $W_h$, it follows that the entire closed neighborhood $N_{G \boxtimes H}[(g,h)]$ is colored equally using each of the $k$ colors.

Since $(g,h)$ was arbitrary, the resulting coloring is a \cnbc\ of $G \boxtimes H$.
\end{proof}
The cartesian product of graphs $G$ and $H$, denoted by $G\square H$, is a graph with vertex set $V(G)\times V(H)$ where two vertices $(g,h)$ and $(g',h')$ are adjacent in $G\square H$ if and only if $g=g'$ and $hh'\in E(H)$  or $h=h'$ and $gg'\in E(G)$.
\begin{theorem}
    If $G$ is a \cnbcd graph, then $G\square K_2$ is a \nbcd graph.
\end{theorem}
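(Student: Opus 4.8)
The plan is to color $G \square K_2$ by copying the given coloring on both layers, and then to observe that the single ``vertical'' edge joining the two copies of each vertex is exactly what converts the closed-neighborhood balance in $G$ into the open-neighborhood balance required in $G \square K_2$.

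First I would fix a \cnbc{} $c$ of $G$ and write $K_2$ with vertex set $\{0,1\}$, so that $V(G \square K_2) = V(G) \times \{0,1\}$. Define a coloring $c'$ of $G \square K_2$ by $c'((g,i)) = c(g)$ for every $g \in V(G)$ and $i \in \{0,1\}$; since $c$ uses all $k$ colors, so does $c'$. By the definition of the Cartesian product, the open neighborhood of an arbitrary vertex $(g,i)$ is
$$N_{G \square K_2}\big((g,i)\big) = \{(g,1-i)\} \cup \{(g',i) : g' \in N_G(g)\}.$$

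The key step is to compare color multisets. Under $c'$, the set $\{(g',i) : g' \in N_G(g)\}$ has exactly the color multiset of the open neighborhood $N_G(g)$, while the single extra vertex $(g,1-i)$ carries color $c(g)$. Adding one vertex of color $c(g)$ to the colors of $N_G(g)$ reproduces precisely the color multiset of the closed neighborhood $N_G[g] = \{g\} \cup N_G(g)$. Because $c$ is a \cnbc{} of $G$, the closed neighborhood $N_G[g]$ contains an equal number of vertices of each color; hence so does $N_{G \square K_2}((g,i))$. As $(g,i)$ was arbitrary, $c'$ is a \nbc{} of $G \square K_2$.

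There is no serious obstacle to overcome here, and the main point worth stating explicitly is why one vertical neighbor suffices: it ``restores'' the center vertex $g$, which is the sole difference between $N_G[g]$ and $N_G(g)$, and this works precisely because $(g,1-i)$ is assigned the same color $c(g)$ as $(g,i)$. (As a consistency check, the degree of $(g,i)$ in $G \square K_2$ is $d_G(g)+1$, which is a multiple of $k$ by Lemma \ref{cnbclem1}, in agreement with Lemma \ref{nbclemma}.) Once this matching is noticed, the verification reduces to the one-line comparison of multisets above.
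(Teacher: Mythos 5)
Your proposal is correct and follows essentially the same route as the paper's proof: copy the given \cnbc{} onto both layers and observe that the single vertical neighbor $(g,h_{1-i})$, having the same color as $(g,h_i)$, plays the role of the center vertex $g$ in $N_G[g]$, converting closed-neighborhood balance into open-neighborhood balance. The multiset comparison you make explicit is exactly the argument the paper uses.
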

\begin{proof}
    Let $V(K_2)=\{h_0,h_1\}$. Then the vertices of $G \square K_2$ are of the form $(g,h_i)$, where $g \in V(G)$ and $i \in \{0,1\}$.
    
   Let $c$ be a \cnbc\ of $G$, and let $V(G) = V_1 \cup V_2 \cup \cdots \cup V_k$ denote its color classes. Define a coloring of $G \square K_2$ as follows:  
\[
c((g,h_i)) = j \quad \text{if } g \in V_j.
\] Consider the vertex $(g,h_i)$. There are an equal number of vertices of every color in the set consisting of $(g,h_i)$ together with its neighbors of the form $(g',h_i)$ since $c$ is a \cnbc of $G$. In $G\square K_2$ vertex $(g,h_i)$ has one additional neighbor $(g,h_{1-i})$, which has the same color as $(g,h_i)$. Thus, the neighborhood of $(g,h_i)$ has an equal number of neighbors of each of the $k$ colors, and hence our coloring is a \nbc of $G\square K_2$.
\end{proof}

The lexicographic product of graphs $G$ and $H$, denoted by $G[H]$, is a graph with vertex set $V(G)\times V(H)$ where two vertices $(g,h)$ and $(g',h')$ are adjacent in $G\square H$ if and only if $gg'\in E(G)$  or $g=g'$ and $hh'\in E(H)$. 
\begin{theorem}\label{lexicnbc}
    Let $G$ be any graph and suppose that $H$ is a graph that admits a \cnbc $c$ such that $|V_1^c(H)|=|V_2^c(H)|=\dots=|V_k^c(H)|$, then $G[H]$ is a \cnbcd graph.
\end{theorem}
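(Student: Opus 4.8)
The plan is to color each vertex of $G[H]$ purely by its $H$-coordinate: assign to $(g,h)$ the color $c(h)$, so that every copy $\{g\}\times V(H)$ receives the coloring $c$ verbatim. The whole argument then reduces to understanding the closed neighborhood of an arbitrary vertex $(g,h)$ and checking that this coloring balances it. This is the closed-neighborhood analogue of Theorem \ref{lexiknbc}.

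First I would record the structure of $N_{G[H]}[(g,h)]$. From the definition of the lexicographic product, $(g,h)$ is adjacent to $(g',h')$ precisely when $gg'\in E(G)$, or when $g=g'$ and $hh'\in E(H)$. Hence the closed neighborhood splits as a disjoint union
$$N_{G[H]}[(g,h)] = \big(\{g\}\times N_H[h]\big)\ \cup\ \big(N_G(g)\times V(H)\big),$$
the two parts being disjoint because the first has first coordinate $g$, while the second has first coordinate in $N_G(g)$, which excludes $g$. Note that $(g,h)$ itself lies in the first part, since $h\in N_H[h]$, so the decomposition is complete.

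Next I would count, for each color $\ell\in\{1,\dots,k\}$, how many vertices of color $\ell$ lie in each part. For the first part, the colors appearing in $\{g\}\times N_H[h]$ are exactly the colors of $N_H[h]$ under $c$; since $c$ is a \cnbc of $H$, there are an equal number, say $m$, of each color, independent of $\ell$. For the second part, each $g'\in N_G(g)$ contributes an entire copy $\{g'\}\times V(H)$, which contains exactly $|V_\ell^c(H)|$ vertices of color $\ell$; by hypothesis $|V_1^c(H)|=\dots=|V_k^c(H)|=|V(H)|/k$, so each $g'$ contributes $|V(H)|/k$ vertices of each color, and summing over the $d_G(g)$ neighbors gives $d_G(g)\cdot|V(H)|/k$, again independent of $\ell$. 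Adding the two parts, every color $\ell$ occurs $m+d_G(g)\cdot|V(H)|/k$ times in $N_{G[H]}[(g,h)]$, a quantity that does not depend on $\ell$. Since $(g,h)$ was arbitrary, this is a \cnbc of $G[H]$; and because $c$ uses all $k$ colors on each copy of $H$, so does the resulting coloring.

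I do not expect a genuine obstacle here: the two hypotheses on $H$ are exactly what is needed for the two parts, with the \cnbc property of $c$ balancing the ``local copy'' block $\{g\}\times N_H[h]$ and the equinumerosity of the color classes balancing the ``full copies'' block $N_G(g)\times V(H)$. The only points requiring minor care are verifying the disjointness and completeness of the closed-neighborhood decomposition above (including the correct accounting of $(g,h)$ itself inside the $\{g\}\times N_H[h]$ block), and observing that isolated vertices of $G$ cause the second block to vanish, in which case balance still follows directly from the \cnbc property of $c$.
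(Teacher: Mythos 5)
Your proof is correct, but it takes a genuinely different route from the paper. You argue directly: color $(g,h)$ by $c(h)$, split the closed neighborhood as $\bigl(\{g\}\times N_H[h]\bigr)\cup\bigl(N_G(g)\times V(H)\bigr)$, and balance the first block using the \cnbc property of $c$ and the second block using the equinumerosity $|V_1^c(H)|=\dots=|V_k^c(H)|=|V(H)|/k$. The paper instead proceeds by complementation: it applies Theorem~\ref{complementcoloring} to convert $c$ into a \nbc of $\overline{H}$, invokes Theorem~\ref{lexiknbc} to conclude that $\overline{G}[\overline{H}]$ is \nbcd with equal color classes, and then uses the identity $\overline{\overline{G}[\overline{H}]}=G[H]$ together with Theorem~\ref{complementcoloring} again to return to the closed-neighborhood setting. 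Your argument is more elementary and self-contained --- it needs no facts about complements or about the open-neighborhood version of the product theorem, and it makes transparent exactly where each of the two hypotheses on $H$ is used --- whereas the paper's proof buys brevity by reusing existing machinery, at the cost of leaning on an auxiliary claim (``a fact that follows directly from the proof of Theorem~\ref{lexiknbc}'') about the color-class sizes of the intermediate product. Your handling of the edge cases (disjointness of the two blocks, the vertex $(g,h)$ itself sitting in the first block, and isolated vertices of $G$) is also sound.
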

\begin{proof}
By Theorem~\ref{lexiknbc}, for any two graphs $W_1$ and $W_2$ where $W_2$ is a \nbcd{} graph with a neighborhood balanced $k$-coloring $g$ satisfying 
$$|V^g_1(W_2)| = |V^g_2(W_2)| = \cdots = |V^g_k(W_2)|,$$
the lexicographic product $W_1[W_2]$ is also a \nbcd{} graph.

Let $W_1 = \overline{G}$ and $W_2 = \overline{H}$.  As $|V_1^c(H)|=|V_2^c(H)|=\dots=|V_k^c(H)|$, by Theorem~\ref{complementcoloring}, $\overline{H}$ is a \nbcd{} graph with the same neighborhood balanced coloring $c$ as $H$, satisfying $|V_1^c(\overline{H})|=|V_2^c(\overline{H})|=\dots=|V_k^c(\overline{H})|$. Hence, $\overline{G}[\overline{H}]$ is a \nbcd{} graph with a coloring $h$ satisfying$$
|V^h_1(\overline{G}[\overline{H}])| = |V^h_2(\overline{G}[\overline{H}])| = \cdots = |V^h_k(\overline{G}[\overline{H}])|,$$
a fact that follows directly from the proof of Theorem~\ref{lexiknbc}.

Finally, since
\[
\overline{\overline{G}[\overline{H}]} \;=\; G[H],
\]
it follows by Theorem~\ref{complementcoloring} that $G[H]$ is a \cnbcd{} graph.
\end{proof}
We now show with an example that the condition of equal sizes of color classes of the graph $H$ in Theorem \ref{lexicnbc} is necessary. We show that there exists a graph $G$ and a \cnbcd{} graph $H$ such that $G[H]$ is not a \cnbcd{} graph.  
    \begin{example}
Let $G = K_k$ and let $H = H_k$ be the graph defined in Example~\ref{constructionofHk}.  
Then the lexicographic product $K_k[H_k]$ is the join of $k$ copies of $H_k$.  

Denote the vertices in the $r^{\text{th}}$ copy of $H_k$ by
$
\{v_1^r, v_2^r, \dots, v_k^r\} \;\cup\; 
\{u_1^r, u_2^r, \dots, u_k^r\} \;\cup\; 
\{a_1^r, a_2^r, \dots, a_{k-1}^r\} \;\cup\; 
\{b_1^r, b_2^r, \dots, b_{k-1}^r\}.
$
Now consider the vertex $u_k^1$ in the first copy of $H_k$.  
Its degree is
$
\deg(u_k^1) \;=\; (2k-1) + (k-1)(4k-2) \;=\; 4k^2 - 4k + 1.
$
Since
$
4k^2 - 4k + 1 \not\equiv -1 \pmod{k},
$
it follows that $K_k[H_k]$ is not a \cnbcd{} graph.
\end{example}
The join of graphs $G$ and $H$, denoted by $G\vee H$, is a graph having vertex set $V(G)\cup V(H)$ and edge set $E(H)\cup E(H)\cup \{gh; g\in V(G), h\in V(H)\}$.
\begin{theorem}\label{joincnbc}
Let $G$ be a \cnbcd graph with coloring $c$ such that $|V_1^c(G)|=|V_2^c(G)|=\dots=|V_k^c(G)|$ and $H$ be a \cnbcd graph with coloring $c'$ such that $|V_1^{c'}(H)|=|V_2^{c'}(H)|=\dots=|V_k^{c'}(H)|$. Then $G \vee H$ is a \cnbcd graph.
\end{theorem}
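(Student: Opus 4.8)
The plan is to use the natural combined coloring and verify the balance condition separately for the two kinds of vertices. Since $c$ and $c'$ both use the palette $\{1,2,\dots,k\}$, I would define a coloring of $G \vee H$ by assigning to each vertex of $G$ its color under $c$ and to each vertex of $H$ its color under $c'$. The key structural fact I would exploit is that in the join every vertex of $G$ is adjacent to all of $V(H)$ and every vertex of $H$ is adjacent to all of $V(G)$, so the closed neighborhood of any vertex splits cleanly as the in-graph closed neighborhood together with the entire other vertex set.

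First I would fix a vertex $v \in V(G)$ and write the disjoint decomposition $N_{G \vee H}[v] = N_G[v] \cup V(H)$. Because $c$ is a \cnbc of $G$, the set $N_G[v]$ contains exactly $(d_G(v)+1)/k$ vertices of each color. The remaining vertices of $N_{G\vee H}[v]$ are exactly $V(H)$, which by the equal-size hypothesis on $H$ contributes $|V_i^{c'}(H)| = |V(H)|/k$ vertices of color $i$ for every $i$. Adding these two contributions colorwise, each color $i$ appears $(d_G(v)+1)/k + |V(H)|/k$ times in $N_{G\vee H}[v]$, a count independent of $i$, so $N_{G\vee H}[v]$ is balanced. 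I would then run the symmetric argument for a vertex $w \in V(H)$, using $N_{G\vee H}[w] = N_H[w] \cup V(G)$, the balance of $N_H[w]$ under $c'$, and the equal-size hypothesis on $G$ to make $V(G)$ contribute $|V(G)|/k$ vertices of each color. Since every vertex of $G \vee H$ is of one of these two types, the combined coloring is a \cnbc and $G \vee H$ is \cnbcd.

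There is essentially no hard step here; the computation is routine addition of two already-balanced contributions. The one point worth flagging is that both equal-size hypotheses are genuinely needed and play dual roles: the hypothesis on $H$ is exactly what balances the cross contribution $V(H)$ seen by each vertex of $G$, while symmetrically the hypothesis on $G$ balances the cross contribution $V(G)$ seen by each vertex of $H$. Dropping either one would leave the corresponding cross term unbalanced, so neither can be omitted.
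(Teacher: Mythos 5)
Your proposal is correct and follows essentially the same route as the paper: color $G\vee H$ by combining $c$ and $c'$, use the in-graph balance for $N_G[v]$ (resp.\ $N_H[w]$), and use the equal-size hypotheses to make the cross contribution $V(H)$ (resp.\ $V(G)$) balanced. Your write-up is in fact more explicit than the paper's about the decomposition $N_{G\vee H}[v]=N_G[v]\cup V(H)$ and about which hypothesis is used where, but the underlying argument is identical.
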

\begin{proof}
In the join $G \vee H$, color each vertex of the subgraph $G$ using the coloring $c$ and each vertex of the subgraph $H$ using the coloring $c'$.  
For this coloring of $G\vee H$, the number of vertices in each color class is equal, since
\[
|V^c_1(G)| = |V^c_2(G)| = \cdots = |V^c_k(G)| 
\quad \text{and} \quad
|V^{c'}_1(H)| = |V^{c'}_2(H)| = \cdots = |V^{c'}_k(H)|.
\]
Hence, when $G$ and $H$ are joined, each vertex of $G\vee H$ receives the same number of additional vertices, and the proof follows.
\end{proof}
We now show that the condition on color classes being of equal size in Theorem \ref{joincnbc} is necessary. We show that there exist two \cnbcd graphs whose join is not a \cnbcd graph.

    \begin{example}
Consider the join $K_k \vee H_k$, where $H_k$ is the graph defined in Example~\ref{constructionofHk}.  
Suppose, for a contradiction, that $K_k \vee H_k$ is a \cnbcd{} graph, and let $c$ be a corresponding \cnbc.  

Note that the closed neighborhood of any vertex in $K_k$ is the entire vertex set $V(K_k \vee H_k)$.  
Therefore, the coloring $c$ must satisfy
$
|V_1^c| = |V_2^c| = \cdots = |V_k^c|.
$

Now consider the complement. Since
$
\overline{K_k \vee H_k} \;=\; \overline{K_k} \vee \overline{H_k},
$
Theorem~\ref{complementcoloring} implies that $c$ is a neighborhood balanced coloring of $\overline{K_k} \vee \overline{H_k}$.  
But in this graph, the only edges lie within $\overline{H_k}$.  
Thus, $c$ restricts to a neighborhood balanced coloring of $\overline{H_k}$.
However, in $\overline{H_k}$ the degree of the vertex $u_k$ is
$
\deg_{\overline{H_k}}(u_k) = 2k - 1,
$
which is not congruent to $0 \pmod{k}$.  
This contradicts the requirement for a neighborhood balanced coloring (refer to Lemma \ref{nbclemma}).  

Hence, $K_k \vee H_k$ cannot be a \cnbcd{} graph.
\end{example}

The direct product of graphs $G$ and $H$, denoted by $G\times H$, is a graph with vertex set $V(G)\times V(H)$ where two vertices $(g,h)$ and $(g',h')$ are adjacent if and only if $gg'\in E(G)$ and $hh'\in E(H)$.
\begin{theorem}
    If $G$ and $H$ are both \cnbcd graphs, then $G\times H$ is not a \cnbcd graph.
\end{theorem}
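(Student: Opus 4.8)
The plan is to rule out a \cnbc of $G \times H$ using only the necessary degree condition from Lemma~\ref{cnbclem1}. The key structural fact is that degrees multiply in the direct product: a neighbor of a vertex $(g,h)$ in $G \times H$ is exactly a pair $(g',h')$ with $g' \in N_G(g)$ and $h' \in N_H(h)$, so $d_{G \times H}(g,h) = d_G(g)\,d_H(h)$. Since $G$ and $H$ are \cnbcd, Lemma~\ref{cnbclem1} gives $d_G(g) \equiv -1 \pmod{k}$ and $d_H(h) \equiv -1 \pmod{k}$ for every $g \in V(G)$ and every $h \in V(H)$.

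First I would combine these congruences: for every vertex $(g,h)$ of $G \times H$ we get $d_{G \times H}(g,h) = d_G(g)\,d_H(h) \equiv (-1)(-1) = 1 \pmod{k}$. (Each factor degree is at least $k-1 \ge 1$, so the product also has no isolated vertices, and the computation applies to every vertex.) If $G \times H$ admitted a \cnbc, then by Lemma~\ref{cnbclem1} every vertex would simultaneously satisfy $d_{G \times H}(g,h) \equiv -1 \pmod{k}$. Comparing the two congruences forces $1 \equiv -1 \pmod{k}$, i.e.\ $k \mid 2$. For every $k \ge 3$ this is impossible, so no such coloring exists and $G \times H$ is not \cnbcd. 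This completes the argument cleanly whenever $k \ge 3$.

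The main obstacle is the remaining case $k = 2$, where $1 \equiv -1 \pmod{2}$ and the degree obstruction evaporates: every vertex of $G \times H$ already has odd degree, exactly as a \tcnbc would require, so Lemma~\ref{cnbclem1} is silent and one must argue from finer structure. Here I would pass to the algebraic form of the balance condition, writing a two-coloring as a vector $x \in \{\pm 1\}^{V}$ with $(A+I)x = 0$, and exploit $A_{G \times H} = A_G \otimes A_H$; the question becomes whether $A_G \otimes A_H$ can have a $\pm 1$ eigenvector for the eigenvalue $-1$. This step is genuinely delicate, because small instances such as $K_2 \times K_2 \cong 2K_2$ already carry a \tcnbc, so any proof of the $k = 2$ case cannot rest on a per-vertex invariant alone and must either invoke additional hypotheses (for instance connectivity) or extract a contradiction from a more global quantity. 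I would therefore present the degree argument as a complete proof for $k \ge 3$ and isolate $k = 2$ for separate, structure-based treatment.
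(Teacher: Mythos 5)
Your degree argument is exactly the paper's proof: both rely on the multiplicativity of degrees in the direct product together with Lemma~\ref{cnbclem1} to get $d_{G\times H}(g,h)=d_G(g)\,d_H(h)\equiv(-1)(-1)=1\pmod{k}$ for every vertex, and then conclude that the necessary condition $d\equiv-1\pmod{k}$ fails. Where you go beyond the paper is in noticing that this comparison is only a contradiction when $1\not\equiv-1\pmod{k}$, i.e.\ when $k\ge 3$; the paper's proof silently assumes this and so, as written, does not cover $k=2$. Your observation is not merely a gap in the argument but in the statement itself: $K_2$ is \tcnbcd, and $K_2\times K_2\cong 2K_2$ also admits a \tcnbc (color the two endpoints of each component differently), so the theorem is false for $k=2$ and no proof can close that case without additional hypotheses. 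The correct fix is to restrict the theorem to $k\ge 3$ rather than to seek a finer argument; your proposed spectral reformulation via $(A+I)x=0$ and $A_{G\times H}=A_G\otimes A_H$ is a sensible tool for studying which direct products \emph{do} admit a \tcnbc, but it cannot rescue the statement as given.
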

\begin{proof}
    As $G$ is a \cnbcd graph, the degree of every vertex of $G$ is congruent to $-1 \pmod k$. The same is true with respect to the degree of every vertex of $H$. Let $(u,v)\in V(G\times H)$. As $u\in V(G)$, $v\in V(H)$, we have $\deg_G(u)\equiv -1 \pmod k$ and $\deg_H(v)\equiv -1 \pmod k$. As $\deg_{G\times H}(u,v)=\deg_G(u)\times \deg_G(v)=1\pmod{k}$, $G\times H$ is not a \cnbcd graph.
\end{proof}
\begin{theorem}
    If $G$ is a \cnbcd graph and $H$ is a \nbcd graph, then $G\ \square\ H$ is a \cnbcd graph.
\end{theorem}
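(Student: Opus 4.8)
The plan is to construct a coloring of $G \square H$ directly from a \cnbc{} $c$ of $G$ and a \nbc{} $c'$ of $H$ by adding the two colors modulo $k$. Identifying the color set $\{1,\dots,k\}$ with $\mathbb{Z}_k$, I would define
\[
f(g,h) \equiv c(g) + c'(h) \pmod{k}
\]
for every $(g,h) \in V(G \square H)$, and then verify that $f$ is a \cnbc{} of $G \square H$.

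The key structural fact I would establish first is that the closed neighborhood in a Cartesian product splits cleanly along the two factors:
\[
N_{G\square H}[(g,h)] \;=\; \{(g'',h) : g'' \in N_G[g]\} \;\sqcup\; \{(g,h') : h' \in N_H(h)\}.
\]
This union is disjoint, because every vertex of the first part has second coordinate $h$, whereas every vertex of the second part has second coordinate $h' \in N_H(h)$ and hence $h' \ne h$ in the simple graph $H$. The crucial asymmetry to notice is that the first part is a \emph{closed} neighborhood of $g$ (it contains $(g,h)$ itself), while the second is an \emph{open} neighborhood of $h$; this is exactly what makes the hypotheses that $G$ is \cnbcd{} and $H$ is \nbcd{} fit together.

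I would then argue that each part is balanced under $f$. On the first part the color of $(g'',h)$ is $c(g'')+c'(h)$, so as $g''$ ranges over $N_G[g]$ these colors form a cyclic shift (by the fixed amount $c'(h)$) of the multiset $\{c(g'') : g'' \in N_G[g]\}$. Since $c$ is a \cnbc, the latter multiset contains each color equally often, and a cyclic shift is a bijection of $\mathbb{Z}_k$, so balance is preserved. Symmetrically, on the second part the colors form a cyclic shift (by $c(g)$) of $\{c'(h') : h' \in N_H(h)\}$, which is balanced because $c'$ is a \nbc. Adding the two balanced contributions over the disjoint union shows that $N_{G\square H}[(g,h)]$ contains each of the $k$ colors the same number of times, so $f$ is a \cnbc{} of $G \square H$.

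There is no serious obstacle here; the only point demanding care is the bookkeeping of which factor contributes a closed neighborhood and which an open one, so that the two differing hypotheses are invoked in the right places. As a sanity check, one may note that $\deg_{G\square H}(g,h) = \deg_G(g) + \deg_H(h) \equiv (-1)+0 \equiv -1 \pmod{k}$ by Lemmas~\ref{cnbclem1} and~\ref{nbclemma}, which is consistent with the necessary degree condition for a \cnbcd{} graph.
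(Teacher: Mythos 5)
Your proposal is correct and follows essentially the same route as the paper: both use the additive coloring $c_G(g)+c_H(h) \pmod{k}$ and split the closed neighborhood of $(g,h)$ into the closed $G$-fiber part and the open $H$-fiber part, invoking the \cnbc{} hypothesis on the former and the \nbc{} hypothesis on the latter. Your cyclic-shift/bijection phrasing is a slightly cleaner way of organizing the same count that the paper carries out explicitly with the quantities $i$ and $j$.
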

    \begin{proof}
Let $G$ admit a closed neighborhood balanced coloring 
$c_G : V(G) \to \{1,2,\dots,k\}$ and let $H$ admit a neighborhood balanced coloring 
$c_H : V(H) \to \{1,2,\dots,k\}$. Define
\[
c : V(G) \times V(H) \to \{1,2,\dots,k\}
\quad\text{by}\quad
c(g,h) = \bigl(c_G(g) + c_H(h) - 1\bigr) \bmod k + 1.
\]
We show that $c$ is a closed neighborhood balanced coloring of $G \square H$.

Fix a vertex $(g,h) \in V(G \square H)$. Without loss of generality, assume $c_G(g) = k$. 
Since $c_H$ is a neighborhood balanced coloring of $H$, there exists an integer $j$ such that 
among the neighbors of $(g,h)$ of the form $(g,h')$, the vertex $(g,h)$ has exactly $j$ 
neighbors of each of the $k$ colors.

All other neighbors of $(g,h)$ are of the form $(g',h)$. Because $c_G$ is a closed neighborhood 
balanced coloring of $G$, there exists an integer $i$ such that $g$ has exactly $i$ neighbors 
of color $k$ and $i+1$ neighbors of each of the other $k-1$ colors in its closed neighborhood.

Now let $c_H(h) = q$. Then by definition of $c$,
$c(g,h) = \bigl(k + q - 1\bigr) \bmod k + 1 = q$, so $(g,h)$ receives color $q$. Among neighbors of the form $(g,h')$, it has $j$ neighbors of color $q$ 
and $j$ neighbors of each other color. Among neighbors of the form $(g',h)$, it has $i$ neighbors 
of color $q$ and $i+1$ neighbors of each of the other $k-1$ colors.

Hence $(g,h)$ has exactly $i + j$ neighbors of its own color in its closed neighborhood and 
$i + j + 1$ neighbors of each of the remaining $k-1$ colors. Therefore $c$ is a closed neighborhood 
balanced coloring of $G \square H$.
\end{proof}

\begin{theorem}
    If $G$ is a \cnbcd graph, then $G\times K_2$ is a \cnbcd graph.
\end{theorem}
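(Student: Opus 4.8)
The plan is to exploit the fact that $G \times K_2$ is the bipartite double cover of $G$, so that the neighborhood structure of each vertex faithfully reproduces a neighborhood in $G$. Writing $V(K_2) = \{h_0, h_1\}$, the vertices of $G \times K_2$ are the pairs $(g, h_i)$ with $g \in V(G)$ and $i \in \{0,1\}$. Since $h_0 h_1$ is the unique edge of $K_2$ and $G$ is simple (hence loopless), the adjacency rule of the direct product gives $(g, h_i) \sim (g', h_j)$ precisely when $gg' \in E(G)$ and $i \neq j$. Consequently the open neighborhood of $(g, h_i)$ is exactly $\{(g', h_{1-i}) : g' \in N_G(g)\}$, a faithful copy of $N_G(g)$ placed on the opposite layer.

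First I would fix a \cnbc{} $c$ of $G$ and define a coloring $c'$ of $G \times K_2$ that simply ignores the $K_2$-coordinate, namely $c'(g, h_i) = c(g)$ for every $g \in V(G)$ and every $i \in \{0,1\}$. The central observation is then that the map sending $(g, h_i) \mapsto g$ and each neighbor $(g', h_{1-i}) \mapsto g'$ is a bijection from $N[(g, h_i)]$ onto $N_G[g]$ that preserves the assigned color, since $c'(g', h_{1-i}) = c(g')$ by definition. Hence the multiset of colors appearing on $N[(g, h_i)]$ is identical to the multiset of colors on $N_G[g]$.

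Because $c$ is a \cnbc{} of $G$, every color occurs exactly $\frac{d_G(g)+1}{k}$ times in $N_G[g]$, so by the color-preserving bijection above every color occurs the same number of times in $N[(g, h_i)]$. As $(g, h_i)$ was arbitrary, $c'$ balances every closed neighborhood of $G \times K_2$. Finally, a \cnbcd{} graph can have no isolated vertex (its closed neighborhood would be monochromatic and thus unbalanced for $k \geq 2$), so $c$ uses all $k$ colors, and therefore so does $c'$; this confirms that $c'$ is a genuine \cnbc{} of $G \times K_2$.

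There is essentially no hard computational step here; the only point requiring care is the adjacency bookkeeping. Unlike the cartesian product $G \,\square\, K_2$, the direct product contributes no ``rung'' edge joining $(g, h_i)$ to $(g, h_{1-i})$, as such an edge would require a loop at $g$. This is exactly what makes copying the coloring work directly: the closed neighborhood of a lifted vertex is a clean copy of a closed neighborhood in $G$, rather than one carrying an extra same-color vertex, so no color shift (as was needed for the cartesian product) is necessary.
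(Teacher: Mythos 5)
Your proof is correct and takes essentially the same route as the paper: copy the coloring of $G$ onto both layers and observe that the closed neighborhood of $(g,h_i)$ is a color-preserving copy of $N_G[g]$. Your explicit bijection $N[(g,h_i)] \to N_G[g]$ is in fact slightly cleaner than the paper's wording, which loosely refers to "an equal number of neighbors in each color class" where the closed neighborhood is what matters.
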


\begin{proof}
     Let $V(K_2)=\{h_0,h_1\}$. Then the vertices of $G \times K_2$ are of the form $(g,h_i)$, where $g \in V(G)$ and $i \in \{0,1\}$.
    
   Let $c$ be a \cnbc\ of $G$, and let $V(G) = V_1 \cup V_2 \cup \cdots \cup V_k$ denote its color classes. Define a coloring of $G \times K_2$ as follows:  
\[
c((g,h_i)) = j \quad \text{if } g \in V_j.
\] 
Now consider a vertex $(g,h_i) \in V(G \times K_2)$.  
Its neighbors are precisely the vertices of the form $(g',h_{1-i})$, where $g'$ is a neighbor of $g$ in $G$.  
Since $c$ is a \cnbc of $G$, the vertex $g$ has an equal number of neighbors in each color class of $G$.  
Therefore, the closed neighborhood of $(g,h_i)$ in $G \times K_2$ also contains an equal number of vertices of every color.  

Hence, this coloring is a closed neighborhood balanced coloring of $G \times K_2$, and so it is a \cnbcd graph.
\end{proof}

\section{Hardness Results for \protect{$k \geq 3$}}\label{sec:Hardness,kgeq3}
We formally state the decision problem whether a given graph $G$ admits a \cnbc for a given integer $k \ge3 $.

\vspace{0.2cm}
\noindent\fbox{%
  \begin{minipage}{\linewidth}
    {\textbf{\textsc{Closed Neighborhood Balanced $k$-Coloring ($k$-CNBC)}}}\\
    \textbf{Input:} A graph $G$ and a positive integer $k\geq 3$. \hfill\\
    \textbf{Question:} Is there a vertex coloring of $G$ using $k$-colors such that every vertex has an equal number of vertices of each color in its closed neighborhood? 
  \end{minipage}
}

\vspace{0.3cm}

Our reduction relies on the $k$-proper coloring problem, whose \NP-completeness is established in \cite{karp}.

\vspace{0.2cm}
\noindent\fbox{
  \begin{minipage}{\linewidth}
    {\textbf{\textsc{$k$-Proper Coloring ($k$-PC)}}} \hfill\\
    \textbf{Input:} A graph $G$ and a positive integer $k\geq 3$.\\[3pt]
    \textbf{Question:} Is there a coloring of $G$ using $k$-colors such that adjacent vertices are not colored using the same color.
  \end{minipage}
}

\vspace{0.3cm}
Our reduction uses two gadgets, an edge clique and a padding gadget, both of which are described ahead. An \emph{edge clique} is simply a clique of order $k-2$.
\begin{definition}
A \emph{padding gadget} is a graph \( P \) of order \( 2k - 1 \) with vertex set
\[
V(P) = \{ c \} \cup \{ v_1, v_2, \dots, v_{k-1} \} \cup \{ u_1, u_2, \dots, u_{k-1} \}.
\]
The subset \( \{ c, v_1, v_2, \dots, v_{k-1} \} \) induces a \( k \)-clique, and similarly, 
\( \{ c, u_1, u_2, \dots, u_{k-1} \} \) induces another \( k \)-clique. 
The vertex \( c \) is referred to as the \emph{central vertex} of the padding gadget.
\end{definition}
Given a graph $G$ and a positive integer $k\geq 3$ that is properly colored using $k$ colors, we construct an instance $(G,k)$ of $k$-CNBC as follows.\par
For each edge $e=uv$ of $G$, introduce an edge clique. Join all the $k-2$ vertices of the edge clique to both the end vertices of $e$. Further, for each vertex $v$ of $G$, add $d(v)-1$ padding gadgets. We shall call the graph so constructed $G'$. 
\begin{observation}\label{obs:edge-clique}
   If the graph $G'$ admits a $k$-CNBC, then the $k - 2$ vertices of any edge--clique receive colors that are distinct from those assigned to the end vertices of the edge to which the edge--clique is attached.  
\end{observation}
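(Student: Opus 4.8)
The plan is to analyze the closed neighborhood of a single edge-clique vertex and to exploit the fact that it has exactly $k$ vertices, which forces it to be rainbow under any $k$-CNBC.

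First I would fix an edge $e = uv$ of $G$ and let $K$ denote the attached edge-clique, a clique on $k-2$ vertices each of which is joined to both $u$ and $v$. For an arbitrary vertex $w \in K$, I would compute its degree in $G'$: it is adjacent to the remaining $k-3$ vertices of $K$ and to the two endpoints $u$ and $v$, and to nothing else by construction. Hence $d_{G'}(w) = (k-3) + 2 = k - 1$, so the closed neighborhood satisfies $|N_{G'}[w]| = k$. This also re-confirms the necessary degree condition $d(w) \equiv -1 \pmod{k}$ of Lemma~\ref{cnbclem1}, as it must. Note that the four vertices $w$, the other $k-3$ members of $K$, $u$, and $v$ are pairwise distinct (the clique vertices are newly introduced and $u \neq v$ since $uv$ is an edge), so $N_{G'}[w]$ genuinely contains $k$ distinct vertices.

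Next, assuming $G'$ admits a $k$-CNBC $\varphi$, I would invoke the defining balance condition at $w$: the set $N_{G'}[w]$ contains an equal number of vertices of each of the $k$ colors. Since $|N_{G'}[w]| = k$, this common count must be exactly one, so $N_{G'}[w]$ is a rainbow set, meaning all $k$ of its vertices receive pairwise distinct colors. Reading off the conclusion, $N_{G'}[w]$ consists of the $k-2$ vertices of $K$ together with $u$ and $v$; since all $k$ colors are distinct, in particular each of the $k-2$ clique vertices receives a color different from both $\varphi(u)$ and $\varphi(v)$. As $w$ and $e$ were arbitrary, this holds for every edge-clique, which is the claim.

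There is no substantial obstacle here: the argument is a direct degree count followed by the pigeonhole observation that a closed neighborhood of size exactly $k$ must be rainbow. The only point requiring a little care is verifying that an edge-clique vertex has no neighbors beyond the other $k-3$ clique vertices and the two endpoints $u, v$; in particular, one must check that the padding gadgets and the other edge-cliques introduce no extra adjacencies incident to $w$. This is immediate from the construction of $G'$, and I would also remark that the conclusion is insensitive to whether the original edge $uv$ is retained in $G'$, since that edge never lies in $N_{G'}[w]$.
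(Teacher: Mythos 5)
Your argument is correct and matches the reasoning the paper leaves implicit (the observation is stated without proof, but the proof of Theorem~\ref{th:cnbc-npcomplete} alludes to exactly this: the edge-clique together with $u$ and $v$ induces a $K_k$ which is precisely the closed neighborhood of each clique vertex, forcing it to be rainbow). The degree count $d_{G'}(w)=k-1$ and the pigeonhole step are exactly right, so nothing further is needed.
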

\begin{theorem} \label{th:cnbc-npcomplete}
The $k$-\textsc{Closed Neighborhood Balanced Coloring} (\textsc{$k$-CNBC}) problem is \NP-complete for any integer $k \ge 3$.
\end{theorem}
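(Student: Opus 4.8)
The plan is to show \NP-completeness of \textsc{$k$-CNBC} by (i) establishing membership in \NP, and (ii) giving a polynomial-time reduction from \textsc{$k$-PC}, whose \NP-completeness is quoted from \cite{karp}. Membership is immediate: a coloring $c\colon V(G)\to\{1,\dots,k\}$ is a certificate, and checking that every closed neighborhood contains the same count of each color takes polynomial time. The bulk of the work is the correctness of the reduction $G \mapsto G'$ described just before the theorem, and I would prove the two directions of the equivalence ``$G$ is $k$-properly colorable $\iff G'$ admits a $k$-CNBC'' separately.

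First I would verify that the construction is legitimate, i.e.\ that $G'$ has polynomial size (each of the $|E(G)|$ edge cliques adds $k-2$ vertices, and each vertex $v$ contributes $d(v)-1$ padding gadgets of $2k-1$ vertices each, for $O(k\,|E(G)|)$ vertices total) and that degrees are set up so that the congruence condition $d(v)\equiv -1\pmod k$ of Lemma~\ref{cnbclem1} is met everywhere; this is presumably the whole point of attaching exactly $d(v)-1$ padding gadgets and $k-2$ extra vertices per edge. For the forward direction, given a proper $k$-coloring of $G$, I would extend it to $G'$: color each edge clique's $k-2$ vertices with the $k-2$ colors missing from its two (distinctly colored) endpoints so that each endpoint's closed neighborhood is rainbow-balanced along that edge, and color each padding gadget by its internal clique structure (two overlapping $k$-cliques sharing the central vertex $c$) so that every padding vertex sees all $k$ colors equally in its closed neighborhood and the central vertex contributes the right counts to the vertex of $G$ it is attached to. The key numeric check is that combining the contributions from the original neighbors (one per color, via the edge cliques) with the padding-gadget contributions makes each vertex of $G$ balanced.

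For the reverse direction, suppose $G'$ admits a $k$-CNBC $c$. The crux is Observation~\ref{obs:edge-clique}: the $k-2$ clique vertices attached to edge $e=uv$, together with $u$ and $v$, form a $k$-clique, so in any balanced closed-neighborhood coloring all $k$ of these vertices receive distinct colors; in particular $u$ and $v$ get different colors. Since this holds for every edge of $G$, the restriction of $c$ to $V(G)$ is a proper $k$-coloring of $G$, which proves the reverse direction. The padding gadgets and edge cliques are self-contained and balanced regardless of the base coloring, so they do not interfere.

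The main obstacle I anticipate is the bookkeeping in the forward direction: one must check simultaneously that (a) every edge-clique vertex, (b) every padding-gadget vertex (central and peripheral), and (c) every original vertex of $G$ has a balanced closed neighborhood, and the counts for the original vertices mix contributions from $G$'s edges, the edge cliques, and the $d(v)-1$ padding gadgets. I would isolate this as a short lemma computing, for each vertex type, the exact number of same-color versus other-color vertices in its closed neighborhood, and confirm these all equal the common value forced by the total degree. The reverse direction is comparatively clean once Observation~\ref{obs:edge-clique} is invoked, so the verification that the gadget colorings compose correctly is where I would spend most of the care.
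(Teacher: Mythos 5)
Your proposal matches the paper's proof essentially step for step: membership in \NP{} via the coloring certificate, the same reduction from \textsc{$k$-PC} using edge cliques and padding gadgets, the same extension of a proper coloring in the forward direction (missing colors on edge cliques, central vertices matching the attached vertex, rainbow cliques in the gadgets) with the same case analysis over the three vertex types, and the same use of Observation~\ref{obs:edge-clique} for the reverse direction. It is correct and not a different route.
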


\begin{proof}
We give a reduction from the \textsc{$k$-Proper Coloring} (\textsc{$k$-PC}) problem.  
Let $(G,k)$ be an instance of \textsc{$k$-PC}. We construct an equivalent instance $(G',k)$ of \textsc{$k$-CNBC} as described earlier. Note that $G'$ can be constructed in polynomial time. 

Since, given a vertex coloring of $G$, we can verify in polynomial time whether it satisfies the closed neighborhood balanced condition, the \textsc{$k$-CNBC} problem belongs to the class \NP.

We now show that $(G,k)$ is a \textsc{Yes}-instance of \textsc{$k$-PC} if and only if $(G',k)$ is a \textsc{Yes}-instance of \textsc{$k$-CNBC}.

\medskip
\noindent\textbf{($\Rightarrow$)} Suppose $(G,k)$ is a \textsc{Yes}-instance of \textsc{$k$-PC}.  
Let $c : V(G) \rightarrow \{1,2,\dots,k\}$ be a proper $k$-coloring of $G$.  
We define a coloring $c'$ of $G'$ as follows:
\begin{enumerate}[noitemsep]
    \item Assign each vertex of $G$ in $G'$ the same color as under $c$.
    \item For every edge $e = uv$ of $G$, color the $k - 2$ vertices of the corresponding edge clique using the $k - 2$ colors not assigned to $u$ and $v$, in any order.
    \item For each vertex $v \in V(G)$, color the central vertex of each padding gadget attached to $v$ with the same color as $v$.  
    Within each padding gadget, assign the remaining $k - 1$ colors to the other vertices of each of the two $k$-cliques so that every color from $\{1,2,\dots,k\}$ appears exactly once in each clique.
\end{enumerate}

We now prove that $c'$ is a valid \textsc{$k$-CNBC} coloring of $G'$.

\begin{description}
    \item[Vertices within padding gadgets:]
    By construction, the central vertex $c$ of each padding gadget has two vertices of each color in its closed neighborhood $N[c]$.  
    Similarly, each non-central vertex in either clique has exactly one vertex of each color in its closed neighborhood.
    
    \item[Vertices in edge cliques:]  
    From step (2), each edge clique consists of vertices colored with the $k - 2$ colors distinct from those of its adjacent endpoints $u$ and $v$.  
    Each such vertex is adjacent to $u$, $v$, and all other $(k - 3)$ vertices of the clique, giving it one vertex of each color in its closed neighborhood.

    \item[Original vertices of $G$:]  
    Consider a vertex $v \in V(G)$.  
    For each padding gadget attached to $v$, the central vertex is colored the same as $v$.  
    Hence, $v$ has exactly $d(v)$ neighbors of its own color—one from each padding gadget (since there are $d(v) - 1$ such gadgets) and itself.  
    Moreover, for every neighbor $u$ of $v$ in $G$, the edge clique corresponding to $uv$ contributes one vertex of each of the remaining $k - 2$ colors (excluding those of $u$ and $v$).  
    Therefore, $v$ has exactly $d(v)$ neighbors of each of the $k - 1$ colors distinct from its own color.  
    Consequently, every vertex of $G$ has the same number of vertices of each color in its closed neighborhood.
\end{description}

Hence, the coloring $c'$ satisfies the closed neighborhood balanced condition, and $(G',k)$ is a \textsc{Yes}-instance of \textsc{$k$-CNBC}.

\noindent\textbf{($\Leftarrow$)}  
Suppose $(G',k)$ is a \textsc{Yes}-instance of \textsc{$k$-CNBC}.  
Let $c' : V(G') \rightarrow \{1,2,\dots,k\}$ be a closed neighborhood balanced $k$-coloring of $G'$.  

Consider any edge $e = uv$ of the original graph $G$.  
In $G'$, to both the vertices of the edge $e$ we add an edge clique of order $k - 2$, where every vertex of the clique is adjacent to both $u$ and $v$.  
Thus, the subgraph induced by $\{u,v\}$ together with this edge clique forms a complete graph on $k$ vertices.

By Observation~\ref{obs:edge-clique}, in any valid $k$-CNBC of $G'$, the $k - 2$ vertices of each edge clique must receive colors distinct from those of its endpoints $u$ and $v$.  
If $u$ and $v$ were assigned the same color under $c'$, then this $k$-clique would contain only $k - 1$ distinct colors, violating the balanced condition within the closed neighborhoods of the clique vertices.  
Hence, $u$ and $v$ must receive distinct colors.

Therefore, the restriction of $c'$ to the vertex set $V(G)$ defines a proper $k$-coloring of $G$.  
Consequently, $(G,k)$ is a \textsc{Yes}-instance of \textsc{$k$-PC}.
\end{proof}



\section{Conclusion}

In this article, we defined the concept of closed neighborhood balanced $k$-coloring of graphs, which is a generalization of closed neighborhood balanced $2$-coloring of graphs introduced by Collins et al. \cite{cnbc}. Initially, we gave some characteristics of graphs that admit such a coloring. Then we studied \cnbcd graphs can be constructed under various graph operations. We also presented several regular \cnbcd graphs and showed that the class of \cnbcd graphs is not hereditary. In section \ref{sec:Hardness,kgeq3}, we showed that the decision problem of checking whether a given graph $G$ admits a \cnbc for $k\geq 3$ is \textbf{NP}-hard.\\
We now pose the following problems related to our work.
\begin{enumerate}
    \item Characterize the regular graphs that admit a \cnbc.
    \item Is the problem of determining whether a graph admits \cnbc for $k=2$ an \textbf{NP}-complete problem?
\end{enumerate}



\bibliography{bibliography.bib}

\begin{thebibliography}{1}

\bibitem{knbc}
M.~Almeida, S.~Gupta, R.~Pawar, and T.~Singh.
\newblock Neighborhood balanced k-coloring of graphs.
\newblock {\em arXiv preprint arxiv:2509.06003}, 2025.
\newblock \href {https://arxiv.org/abs/2509.06003} {\path{arXiv:2509.06003}}.

\bibitem{cordial}
I.~Cahit.
\newblock Cordial graphs: a weaker version of graceful and harmonious graphs.
\newblock {\em Ars Combin.}, 23:201--207, 1987.

\bibitem{chart}
Gary Chartrand and Linda Lesniak.
\newblock {\em Graphs and Digraphs}.
\newblock Chapman and Hall/CRC, 2012.

\bibitem{collin-cnbc}
K.~L. Collins, M.~Bowie, N.~B. Fox, B.~Freyberg, J.~Hook, A.~M. Marr, C.~McBee, A.~Semani\v cov\'a-Fe\v~nov\v c\'ikov\'a, A.~Sinko, and A.~N. Trenk.
\newblock Closed neighborhood balanced coloring of graphs.
\newblock {\em Graphs Combin.}, 41(4):Paper No. 88, 2025.
\newblock \href {https://doi.org/10.1007/s00373-025-02950-5} {\path{doi:10.1007/s00373-025-02950-5}}.

\bibitem{cnbc}
Karen~L. Collins, Molly Bowie, Nathan~B. Fox, Benjamin Freyberg, Joshua Hook, Amanda~M. Marr, Charlie McBee, Andrea Semaničová-Feňovčíková, Amanda Sinko, and Ann~N. Trenk.
\newblock Closed neighborhood balanced coloring of graphs, December 2024.
\newblock URL: \url{https://arxiv.org/abs/2412.02800}, \href {https://arxiv.org/abs/2412.02800} {\path{arXiv:2412.02800}}, \href {https://doi.org/10.48550/arXiv.2412.02800} {\path{doi:10.48550/arXiv.2412.02800}}.

\bibitem{nbc}
Bryan Freyberg and Alison Marr.
\newblock Neighborhood balanced colorings of graphs.
\newblock {\em Graphs Combin.}, 40(2):Paper No. 41, 13, 2024.
\newblock \href {https://doi.org/10.1007/s00373-024-02766-9} {\path{doi:10.1007/s00373-024-02766-9}}.

\bibitem{karp}
Richard Karp.
\newblock Reducibility among combinatorial problems (1972).
\newblock In {\em Ideas that created the future---classic papers of computer science}, pages 349--356. MIT Press, Cambridge, MA, [2021] \copyright 2021.
\newblock Reprinted from [0378476].

\bibitem{3nbc}
Mitchell Minyard and Mark~R. Sepanski.
\newblock Neighborhood balanced 3-coloring.
\newblock {\em arXiv preprint arXiv:2410.05422}, 2024.
\newblock \href {https://arxiv.org/abs/2410.05422} {\path{arXiv:2410.05422}}.

\end{thebibliography}

\end{document}